\DeclareMathOperator{\ord}{ord}
\DeclareMathOperator{\norm}{Norm}
\newcommand{\QQ}{\mathbb Q}
\newcommand{\F}{\mathbb F}
\newcommand{\ZZ}{\mathbb Z}
\newcommand{\OK}{\mathcal O_K}
\newcommand{\fa}{\mathfrak a}
\newcommand{\fp}{\mathfrak p}
\newcommand{\fq}{\mathfrak q}
\newcommand{\tu}{\tilde u}
\newcommand{\bound}{10^4}
\newcommand{\fz}{\mathfrak z}
\newcommand{\OL}{\mathcal O_L}
\newcommand\margnote[1]{\marginpar{\tiny\begin{minipage}{20mm}\begin{flushleft} #1\end{flushleft}\end{minipage}}}
\newtheorem{Th}{Theorem}[section]
\newtheorem{Lemma}[Th]{Lemma}
\theoremstyle{definition}
\theoremstyle{remark}
\newtheorem{Remark}[Th]{Remark}
\definecolor{cadmiumgreen}{rgb}{0.0, 0.42, 0.24}
\begin{document}

\title[]{Perfect powers that are sums of squares of an arithmetic progression}

\author{Debanjana Kundu}
\address{Department of Mathematics, University of Toronto\\
Bahen Centre, 40 St. George St., Room 6290, Toronto, Ontario, Canada, M5S 2E4}
\email{dkundu@math.utoronto.ca}

\author{Vandita Patel}
\address{Department of Mathematics, University of Toronto, Bahen Centre, 40 St. George St., Room 6290, Toronto, Ontario, Canada, M5S 2E4}
\email{vandita@math.utoronto.ca}

\date{\today}

\keywords{Exponential equation, Lehmer sequences, primitive divisors}
\subjclass[2010]{Primary 11D61}

\begin{abstract}
In this paper, we determine all primitive solutions
to the equation $(x+r)^2  + (x + 2r)^2 + \cdots + (x+dr)^2 = y^n$ for $2\leq d\leq 10$  and for $1 \le r \le \bound$ 
We make use of a factorization argument and the Primitive Divisors Theorem 
due to Bilu, Hanrot and Voutier.
\end{abstract}

\maketitle

\section{Introduction} \label{intro}

Finding perfect powers that are sums of terms in an arithmetic progression has received much interest; recent contributions can be found in 
\cite{ArgaezPatel}, \cite{BGP}, \cite{BPS1}, \cite{BPS2}, \cite{BPSSoydan}, 
\cite{Ha}, 
\cite{Patel18}, \cite{PatelSiksek}, 
\cite{Soydan}, \cite{Zhang}, \cite{ZZhang} and \cite{ZB}.
In this paper, we consider the equation:
\begin{equation}\label{eq:main}
(x+r)^2  + (x + 2r)^2 + \cdots + (x+dr)^2 = y^n \quad x,y,r,n \in \ZZ,\; n \ge 2.
\end{equation}
We say an integer solution $(x,y)$ of equation~\eqref{eq:main} is \textit{primitive} 
if $\gcd(x,y,r)=1$. 

In this paper, we prove the following theorem:


\begin{Th}\label{thm:main}
Let $2\leq d\leq 10$ and $1\leq r \leq 10^4$. 
All primitive solutions to equation~\eqref{eq:main}  for $d=2$ with prime exponent $n\geq3$ are given in Table~\ref{table:solutionsAP2}, and with exponent $n=4$ are given in Table~\ref{table:solutionsAP24}. For $d=3$, all primitive solutions are given in \cite{KoutsianasPatel} and for $d=6$, all primitive solutions are recorded in Table~\ref{table:solutionsAP6}.

When $d=2$ and $n=2$, we have no solutions unless every prime divisor
of $r$ is congruent to $\pm 1 \pmod{8}$. Suppose 
we are in the latter case, let $r=q_1^{t_1} \cdots q_s^{t_s}$
where the $q_i$ are distinct primes. For each $i$ we may
write $q_i=\norm(\mathfrak{q}_i)$ with $\mathfrak{q}_i \in \ZZ[\sqrt{2}]$.
Then the solutions are given by
\[
2x+3r+y\sqrt{2}=\pm \mathfrak{r}^2 \cdot (1+\sqrt{2})^{2k+1}, \qquad k \in \ZZ
\]
where $\mathfrak{r}=\mathfrak{r}_1^{t_1} \cdots \mathfrak{r}_s^{t_s}$
and each $\mathfrak{r}_i$ is either $\mathfrak{q}_i$ or its
conjugate $\overline{\mathfrak{q}_i}$.
\end{Th}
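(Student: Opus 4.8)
The plan is to handle \eqref{eq:main} one value of $d$ at a time, in each case reducing it to a generalised Pell/norm equation and then splitting by the exponent. Expanding and completing the square turns the left-hand side into
\[
d\Bigl(x+\tfrac{(d+1)r}{2}\Bigr)^2+\frac{(d-1)d(d+1)}{12}\,r^2=y^n,
\]
so after clearing denominators the cases of interest read $(2x+3r)^2+r^2=2y^n$ for $d=2$, $(3x+6r)^2+6r^2=3y^n$ for $d=3$, and $(6x+21r)^2+105r^2=6y^n$ for $d=6$; I write $z$ for the linear form being squared and $C\in\{2,3,6\}$ for the constant on the right. Since a solution with composite exponent $n=ab$ gives the primitive solution $(x,y^b)$ with exponent $a$, it suffices to treat the odd primes $n\ge3$, the exponent $n=4$, and $n=2$; moreover $n=4$ is just the $n=2$ equation with its variable replaced by a square, so Table~\ref{table:solutionsAP24} is obtained by sieving the $n=2$ parametrisation for the solutions in which that variable is a perfect square.

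For $d=2$, $n=2$ the equation is $z^2-2y^2=-r^2$ with $z=2x+3r$, i.e.\ $\norm(z+y\sqrt2)=-r^2$ in $\ZZ[\sqrt2]$. A solution can exist only if every rational prime $q\mid r$ splits in $\ZZ[\sqrt2]$ (an inert or ramified $q$ would, via $\gcd(x,y,r)=1$, force a contradiction), and splitting is equivalent to $q\equiv\pm1\pmod8$; this is the stated necessary condition, and in particular $r$ is odd. Granting it, write $r=q_1^{t_1}\cdots q_s^{t_s}$ and pick $\mathfrak q_i\mid q_i$ with $\norm(\mathfrak q_i)=q_i$. Because $r$ is odd the ideal $(z+y\sqrt2)$ and its conjugate are coprime, so from $\norm(z+y\sqrt2)=-r^2$ one gets $(z+y\sqrt2)=(\mathfrak r)^2$ with $\mathfrak r=\mathfrak r_1^{t_1}\cdots\mathfrak r_s^{t_s}$ and each $\mathfrak r_i\in\{\mathfrak q_i,\overline{\mathfrak q_i}\}$. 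Passing to elements introduces a unit $\pm(1+\sqrt2)^m$, and since the norm must equal $-r^2<0$ only the odd powers $m=2k+1$ survive; this yields exactly $2x+3r+y\sqrt2=\pm\mathfrak r^2(1+\sqrt2)^{2k+1}$.

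For the odd primes $n\ge3$ I would run the factorisation argument in the relevant imaginary quadratic order: $\ZZ[i]$ for $d=2$, $\ZZ[\sqrt{-6}]$ for $d=3$, and $\ZZ[\sqrt{-105}]$ for $d=6$. Writing the equation as $(z+r\sqrt{-m})(z-r\sqrt{-m})=Cy^n$ (with $m=1,6,105$ respectively), the key lemma is that the greatest common divisor of the two conjugate factors is supported only on the ramified primes dividing $C$; here $\gcd(x,y,r)=1$ is exactly what rules out a common odd prime divisor, since a prime dividing both $z$ and $r$ would divide $y$. As $n$ is odd it is coprime to the class numbers $h\in\{1,2,8\}$ involved; hence the coprime part of $(z+r\sqrt{-m})$ is the $n$-th power of an ideal which, as $\gcd(n,h)=1$, must itself be principal. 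After stripping the bounded ramified factor we obtain $z+r\sqrt{-m}=\mu\,\lambda\,(a+b\sqrt{-m})^n$ with $\mu$ a unit and $\lambda$ a generator of an ideal supported on the ramified primes dividing $C$ (hence of bounded norm). Matching the $\sqrt{-m}$-parts of this relation and its conjugate expresses $r$ through the index-$n$ terms $\tilde u_n$ of the Lehmer sequence attached to the pair $(a+b\sqrt{-m},\,a-b\sqrt{-m})$.

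At this stage the Primitive Divisors Theorem of Bilu--Hanrot--Voutier does the work: a divisibility-and-size analysis of the identity shows that, for $1\le r\le\bound$, the Lehmer number $\tilde u_n$ cannot acquire a primitive divisor, which is impossible once $n>30$ unless the pair is one of the finitely many listed exceptions; hence $n$ is bounded by an explicit constant. It then remains, for each surviving prime $n\in\{3,5,7,\dots\}$, for $n=4$, and for each $r\le\bound$, to resolve the resulting Thue-type or elliptic equations and tabulate the outcomes: $d=3$ is precisely the content of \cite{KoutsianasPatel} (the same field $\ZZ[\sqrt{-6}]$), while $d=2$ and $d=6$ yield Tables~\ref{table:solutionsAP2}, \ref{table:solutionsAP24} and \ref{table:solutionsAP6}, the $n=2$ case for $d=6$ being settled by the analogous real norm equation. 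I expect the two real obstacles to be: (i) the coprimality and descent bookkeeping, which is delicate because $\gcd(x,y,r)=1$ does \emph{not} force $\gcd(y,r)=1$, so the ramified common factor and the unit $\mu$ must be tracked to keep the Lehmer pair integral and to pin down $\lambda$; and (ii) the small exponents $n=3,4$ together with the exceptional Lehmer indices, where the Primitive Divisors Theorem gives no information and one must descend to the associated curves and integral-point computations.
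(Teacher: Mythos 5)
Your outline reproduces the paper's strategy for the cases it treats, but as written it has three genuine gaps. First, the theorem covers all $2\leq d\leq 10$, and you never eliminate $d=4,5,7,8,9,10$. The paper does this with the precursory lemmata: $2$-adic and $3$-adic valuation arguments (Lemmata~\ref{lem:1} and~\ref{lem:2}) kill $d=4,9$ and reduce $d=8$ to $n=2$, while the congruence lemma for primes $q\equiv\pm 5\pmod{12}$ dividing $d$ (Lemma~\ref{lem:5mod12}) kills $d=5,7,10$; the residual cases $d=6,8$ with $n=2$ are disposed of by quadratic-residue arguments modulo $7$ and modulo $3$ respectively. Your remark that $d=6$, $n=2$ is ``settled by the analogous real norm equation'' is not right: $3(2x+7r)^2+35r^2=2y^2$ is definite, not a real norm form, and the paper refutes it by noting $6$ is a non-square modulo $7$; $d=8$, $n=2$ is absent from your proposal entirely.

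Second, your plan for $n=4$ --- sieve the infinite $n=2$ family $2x+3r+y\sqrt{2}=\pm\mathfrak{r}^2(1+\sqrt{2})^{2k+1}$ for those $k$ making $y$ a perfect square --- is not a finite procedure: recognizing squares in a Pell-type binary recurrence is exactly the hard problem of Cohn's ``Perfect Pell powers,'' and you give no bound on $k$. The paper instead descends in $\ZZ[i]$ via $(x+2r)+i(x+r)=\epsilon\alpha^4$ and reduces to the quartic Thue equations $\pm r=u^4\mp 4u^3v-6u^2v^2\pm 4uv^3+v^4$, which are effectively solvable. Third, your BHV step is misstated in a way that would invalidate the computation: since $\tilde u_n=r/v$ divides $r$, you cannot conclude that $\tilde u_n$ ``cannot acquire a primitive divisor'' and hence $n\leq 30$. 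A primitive divisor $q$ may well exist; the correct conclusion (Lemma~\ref{lem:B6} for $d=6$, and Theorem~\ref{thm:AMLST} for $d=2$) is that $q\mid r$ with $q\nmid 210v$ and $n\mid q-\left(\frac{-c}{q}\right)$, so the admissible exponents $n$ range up to roughly $10^4$ for $r\leq 10^4$, and the search must cover that whole set $B$; restricting to $n\leq 30$ risks missing solutions. The parts you do have --- the $d=2$, $n=2$ descent in $\ZZ[\sqrt{2}]$ with the norm-$(-1)$ unit forcing odd powers of $1+\sqrt{2}$, the class-group argument using $\gcd(n,8)=1$, and the passage to the Lehmer pair $\gamma/\sqrt{6}$, $\bar\gamma/\sqrt{6}$ in $\QQ(\sqrt{-105},\sqrt{6})$ --- do match the paper, though note that for $d=2$ with odd prime $n$ the paper avoids high-degree Thue equations altogether by observing that $\alpha\pm i\bar\alpha$ divides the left side of $\alpha^n+i\bar\alpha^n=(1+i)r$, so $(u\pm v)\mid r$ and one only solves univariate polynomial equations for each divisor of $r$, which is essential for the computation to scale.
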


In this paper, we have incorporated the results of \cite{Cohn}
into Theorem~\ref{thm:main} 
where equation~\eqref{eq:main} was solved for $d=2,r=1, n\geq 3$.  
In the proof of Theorem~\ref{thm:main}, we are able to solve the case $d=2,r=1, n\geq 3$ uniformly with other values of $r\leq \bound$, using a different approach to that taken in \cite{Cohn}.
We have also consolidated the results of 
\cite{KoutsianasPatel} into Theorem~\ref{thm:main}, where the second author and Koustsianas list all primitive solutions to equation~\eqref{eq:main} for the case $d=3$ and prime exponent $n$. 
General theorems on equations of the form $x^2 + C = 2y^p$ can be found in  \cite{AMLST}, \cite{Tengely} and  \cite{Tengely2}. 
As in \cite{AMLST}, \cite{KoutsianasPatel} and \cite{Patel18}, our main tool is the characterization of primitive divisors in Lehmer sequences due to Bilu, Hanrot and Voutier \cite{BHV}.

\begin{Remark}

\begin{enumerate}

\item Primitive solutions to equation~\eqref{eq:main} with exponent $n$ that is composite can be recovered from Tables~\ref{table:solutionsAP2}, \ref{table:solutionsAP24} and \ref{table:solutionsAP6} by checking whether $y$ is a perfect power.


\item Values of $d$ where all prime divisors of $d$ are congruent to $\pm 1 \pmod{12}$ are not amenable to the techniques used in this paper, hence we have the restriction $2 \leq d \leq 10$. 
\end{enumerate}

\end{Remark}

We now briefly explain the organization of the paper. In $\S \ref{sec: precursory lemmata}$ we record some preliminary lemmata. In $\S \ref{sec:n=2}$ and $\S \ref{sec:n=4}$, we solve equation~\eqref{eq:main} when $n$ is even for relevant $d$. In $\S \ref{section: Primitive prime divisors of Lucas and Lehmer sequences}$, we recall the main theorem of \cite{BHV} which is essential in proving Theorem~\ref{thm:main}. 
In $\S \ref{sec:2AP}$ and $\S \ref{sec:6AP}$, we solve equation~\eqref{eq:main} for the case $d=2$ and $d=6$ respectively, with $n$ an odd prime. This complete the proof of the theorem stated in $\S 1$. All tables of solutions can be located in $\S \ref{sec:solntables}$.

%
\section*{Acknowledgement}
%
The second author is incredibly indebted to the University of Toronto and their mathematics department for their amazing support during such a difficult period.
In this regard, both authors are most thankful to Facebook Messenger, which greatly supported mathematics communications across continents. We also thank Professor Szabolcs  Tengely for his comments on a first draft of this manuscript and for pointing towards work that helped us to amend some proofs and greatly increase the efficiency and speed of computations.
Special thanks go to Professor Kumar Murty and to GANITA Lab as they are all full of sheer awesomeness.
The second author would also like to thank Celine, Heline and Dan, Jihad and Jeanne,  Lilit, Milena and Tim, Mirna and Simon, Pete, Priti, Priya and Sapna for their continuing support.
Last, but definitely not least, the second author wants to extend all of her gratitude and love towards Pravin, whose support this past year has been crucial to all of her mathematical endeavours.

%
%

%
\section{Some precursory lemmata} {\label{sec: precursory lemmata}}
%
In this section, we adapt Lemma 2.2 and 2.3 from \cite{Patel18} and Lemma 2.3 from \cite{PatelSiksek}.
We rewrite equation~\eqref{eq:main}  as
\begin{equation}\label{eqn:conssquares1}
dx^2 + d(d+1)xr + \frac{d(d+1)(2d+1)}{6}r^2 = y^n.
\end{equation}

Factorising and completing the square gives us
\begin{equation}\label{eqn:conssquares}
d \left(\left( x + \frac{d+1}{2}r \right)^2  + \frac{(d-1)(d+1)}{12}r^2 \right) = y^n.
\end{equation}

Observe in particular that $y \ne 0$.

\begin{Lemma}\label{lem:1}
Let $j = \ord_2(d)$. If $j\geq 2$, then in equation~\eqref{eqn:conssquares} we have $n \mid (j-1)$.
\end{Lemma}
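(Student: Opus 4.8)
The plan is to read off the $2$-adic valuation of $y^n$ directly from the expanded form \eqref{eqn:conssquares1} by comparing the valuations of its three terms. Writing $d = 2^j m$ with $m$ odd, I would first record that $d+1$ and $2d+1$ are odd and that $6 = 2\cdot 3$, so that the constant-in-$x$ coefficient satisfies
\[
\ord_2\left(\frac{d(d+1)(2d+1)}{6}\right) = \ord_2\left(2^{\,j-1}\cdot\frac{m(d+1)(2d+1)}{3}\right) = j-1 ,
\]
the quotient $m(d+1)(2d+1)/3$ being an odd integer: it is an integer because $3 \mid d(d+1)(2d+1)$ while $3 \nmid 2^j$, and it is odd because every factor is odd. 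Consequently the third term of \eqref{eqn:conssquares1} has $2$-adic valuation exactly $j-1+2\ord_2(r)$, whereas $dx^2$ and $d(d+1)xr$ have valuations $j+2\ord_2(x)$ and $j+\ord_2(x)+\ord_2(r)$, each of which is at least $j$.

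When $r$ is odd the third term strictly dominates: its valuation $j-1$ is smaller than the valuations $\ge j$ of the other two, so the minimum is attained uniquely and $\ord_2(y^n)=j-1$ exactly. Since $\ord_2(y^n) = n\,\ord_2(y)$, this immediately forces $n \mid (j-1)$, which is the desired conclusion. Thus for odd $r$ the whole assertion reduces to this one valuation comparison, using only $j \ge 2$ (so that $j-1 \ge 1$ and, in particular, $y$ is forced to be even).

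The main obstacle is the case $r$ even, where the valuations $j+2\ord_2(x)$, $j+\ord_2(x)+\ord_2(r)$ and $j-1+2\ord_2(r)$ are no longer separated by the constant term, so one cannot simply read off $\ord_2(y^n)$. Here I would invoke primitivity: if $r$ is even then $\gcd(x,y,r)=1$ forces $x$ to be odd, for otherwise $x$ and $r$ would both be even and $y$ would have to be odd, contradicting that the left-hand side of \eqref{eqn:conssquares1} is then even. With $x$ odd, the term $dx^2$ becomes the unique term of least valuation $j$, so one reads off $\ord_2(y^n)=j$, and the point becomes to reconcile this with the claimed divisibility in the range of exponents to which the lemma is applied. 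I expect the careful parity bookkeeping in this even-$r$ branch -- deciding precisely which term of \eqref{eqn:conssquares1} attains the minimal valuation and inserting $\gcd(x,y,r)=1$ at exactly the right step -- to be the only delicate point; the odd-$r$ case is essentially free from the valuation of the constant term alone.
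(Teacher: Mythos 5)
Your odd-$r$ computation is correct and is in essence the paper's argument: when $2\nmid r$ the constant term of \eqref{eqn:conssquares1} has $2$-adic valuation exactly $j-1$ while the other two terms have valuation at least $j$, so $n\ord_2(y)=j-1$ and the conclusion follows. (The paper phrases this through the completed-square form, reducing the bracket modulo $4$, but it is the same valuation count.)

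The even-$r$ branch, however, is a genuine gap, and no amount of ``parity bookkeeping'' can close it. Your own computation there gives $\ord_2(y^n)=j$, i.e.\ $n\mid j$; since $\gcd(j,j-1)=1$, this is incompatible with the desired conclusion $n\mid(j-1)$ for every $n\ge 2$. So the lemma can only hold if even $r$ is impossible, and under the hypothesis $\gcd(x,y,r)=1$ alone it is not: take $d=4$ (so $j=2$), $r=2$, $x=-3$, $y=6$, $n=2$; then $(-1)^2+1^2+3^2+5^2=36=6^2$ and $\gcd(-3,6,2)=1$, yet $n=2$ does not divide $j-1=1$. What the paper actually does is exclude even $r$ at the outset: since $j\ge 2$, every term on the left of \eqref{eqn:conssquares1} is even, hence $2\mid y$, and then primitivity is invoked to conclude $2\nmid r$. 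That step uses coprimality of $y$ and $r$, not merely $\gcd(x,y,r)=1$; the paper makes this stronger reading explicit in the proof of Lemma~\ref{lem:2} (``we assume throughout that $\gcd(y,r)=1$'') and again in the $d=6$ section, where pairwise coprimality of $x,y,r$ is used. The missing idea in your write-up is precisely this exclusion: deduce $2\mid y$ from $j\ge2$, then use $\gcd(y,r)=1$ to force $r$ odd, which collapses everything to your first case. As written, your proposal proves the lemma only for odd $r$ and leaves open a case in which the statement, under the weak notion of primitivity you are working with, is actually false rather than merely delicate.
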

\begin{proof}
Let $D = d/2^2$. We substitute into equation~\eqref{eqn:conssquares} to get,
\begin{equation}\label{eqn:dval}
D \left(\left( 2x + (d+1)r \right)^2  + \frac{(d-1)(d+1)}{3}r^2 \right) = y^n.
\end{equation}
Since $j \geq 2$, equation~\eqref{eqn:conssquares1} shows that $2 \mid y$ and therefore, $2 \nmid r$ since $(x,y)$ is a primitive solution.
Observe that 
\[
(2x+(d+1)r)^2 \equiv 1 \pmod{4}, \qquad
\frac{(d-1)(d+1)}{3}r^2 \equiv 1 \pmod{4}.
\]
Comparing valuations on both sides of equation~\eqref{eqn:dval} we see that
\[
n \ord_2(y)=\ord_2(D)+1=j-1.
\]
This completes the proof.
\end{proof}

\begin{Lemma}\label{lem:2}
Let $j = \ord_3(d)$. If $j\geq 2$,  then in equation~\eqref{eqn:conssquares}, we have $n \mid (j-1)$.
\end{Lemma}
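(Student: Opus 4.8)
The plan is to mirror the proof of Lemma~\ref{lem:1}, replacing the prime $2$ by $3$ and carrying out the corresponding $3$-adic valuation computation on the completed-square form~\eqref{eqn:conssquares}. First I would record the two inputs that play the role of ``$2\mid y$'' and ``$2\nmid r$''. Since $j=\ord_3(d)\geq 2$ we have $9\mid d$, so every term on the left of equation~\eqref{eqn:conssquares1} is divisible by $3$: the first two because $3\mid d$, and the third because $\ord_3\!\big(\tfrac{d(d+1)(2d+1)}{6}\big)=j-1\geq 1$ (here $3\nmid(d+1)$ and $3\nmid(2d+1)$, as $3\mid d$). Hence $3\mid y^n$ and so $3\mid y$. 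As in Lemma~\ref{lem:1}, primitivity of the solution $(x,y)$ then forces $3\nmid r$.

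Next I would isolate a factor of $3^2$. Writing $d=9D$ and substituting into~\eqref{eqn:conssquares}, in the same spirit as $D=d/4$ was used to pass to~\eqref{eqn:dval}, I multiply the squared term by $9$ and compensate by replacing $d$ with $D$, obtaining
\[
D\left( \bigl(3x + \tfrac{3(d+1)}{2}r\bigr)^2 + \tfrac{3(d^2-1)}{4}r^2 \right) = y^n .
\]
I would then compute the $3$-adic valuation of the bracket. The first term equals $9\,(x+\tfrac{d+1}{2}r)^2$ and so has $\ord_3\geq 2$, while the second term has $\ord_3=1$, since $\ord_3(3/4)=1$ whereas $3\nmid(d^2-1)$ (as $3\mid d$) and $3\nmid r$. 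Because the two summands have distinct valuations, the valuation of their sum is pinned down, and the bracket has $\ord_3$ exactly $1$.

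Comparing $3$-adic valuations across the displayed equation then yields
\[
n\,\ord_3(y) = \ord_3(D) + 1 = (j-2) + 1 = j - 1,
\]
and in particular $n\mid(j-1)$, as required. (Equivalently, one can bypass the rescaling and simply compare the $\ord_3$ of the three terms of~\eqref{eqn:conssquares1} directly: the first two have $\ord_3\geq j$ and the third has $\ord_3=j-1$, so the left-hand side has valuation exactly $j-1$.) The computation itself is routine; the only steps needing genuine care are securing $3\nmid r$ from primitivity, exactly as in Lemma~\ref{lem:1}, and verifying that the two bracket terms really do have distinct $3$-adic valuations so that the sum has valuation $1$ — this is the $3$-adic analogue of the observation in Lemma~\ref{lem:1} that both bracketed terms are $\equiv 1\pmod 4$ and hence sum to something of valuation exactly $1$. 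I expect the $3\nmid r$ reduction to be the more delicate of the two, since the conclusion $n\mid(j-1)$ genuinely fails in the branch $3\mid r$ (there one instead obtains $n\mid j$).
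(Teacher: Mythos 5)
Your proof is correct and follows essentially the same route as the paper: establish $3\mid y$ from equation~\eqref{eqn:conssquares1}, use primitivity to get $3\nmid r$, then compare $3$-adic valuations in the completed-square form. The only (cosmetic) difference is that the paper factors out $D=d/3$, so its bracket has $\ord_3$ equal to $0$ and $n\ord_3(y)=\ord_3(D)=j-1$ directly, whereas you factor out $D=d/9$ in exact analogy with Lemma~\ref{lem:1} and get a bracket of valuation exactly $1$; both computations yield $n\ord_3(y)=j-1$.
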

\begin{proof}
Let $D = d/3$. We substitute into equation~\eqref{eqn:conssquares} to get,
\[
D \left(3\left( x + \frac{(d+1)}{2}r \right)^2  + \frac{(d-1)(d+1)}{4}r^2 \right) = y^n.
\]
Since $j \geq 2$, equation~\eqref{eqn:conssquares1} asserts that $3 \mid y$ and therefore $3 \nmid r$ since we assume throughout that $\gcd(y,r) = 1$. 
Observe that the expression in brackets is never divisible by 3. Hence 
$\ord_3(D)=\ord_3(y^n)=n \ord_3(y)$, thus proving the lemma.
\end{proof}

\begin{Lemma}\label{lem:5mod12}
Let $r$ be a non-zero positive integer. Let $q$ 
be a prime 
such that 
$q \equiv \pm 5 \pmod{12}$.
Let $d$ be a positive integer such that 
$\ord_q(d) \not\equiv 0 \pmod{n}$. Then equation~\eqref{eq:main} has no solutions.
\end{Lemma}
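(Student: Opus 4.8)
The plan is to follow the valuation strategy of Lemmas~\ref{lem:1} and~\ref{lem:2}, now comparing $q$-adic valuations on the two sides of~\eqref{eqn:conssquares}. Write $j=\ord_q(d)$. The hypothesis $\ord_q(d)\not\equiv 0\pmod n$ forces $j\geq 1$, so $q\mid d$; moreover $q\neq 2,3$ since $q\equiv\pm5\pmod{12}$. I would first clear the denominator in~\eqref{eqn:conssquares} by multiplying through by $12$, obtaining
\[
12\,y^n = d\,B,\qquad B:=3\bigl(2x+(d+1)r\bigr)^2+(d^2-1)r^2 .
\]
Because $q\nmid 12$, this gives $\ord_q(y^n)=j+\ord_q(B)$, and since $\ord_q(y^n)=n\,\ord_q(y)$ it would suffice to show $q\nmid B$: that yields $n\,\ord_q(y)=j$, hence $n\mid j$, contradicting $\ord_q(d)\not\equiv 0\pmod n$.

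So the heart of the argument is to prove $q\nmid B$. Reducing modulo $q$ and using $q\mid d$ gives $B\equiv 3(2x+r)^2-r^2\pmod q$. I would treat two cases. If $q\nmid r$, then $q\mid B$ would give $3(2x+r)^2\equiv r^2\pmod q$; noting that $q\nmid(2x+r)$ here, dividing shows that $3$ is a nonzero square modulo $q$, i.e.\ $\left(\frac{3}{q}\right)=1$. The key input is then quadratic reciprocity with its supplementary law, which gives $\left(\frac{3}{q}\right)=-1$ exactly when $q\equiv\pm5\pmod{12}$ --- precisely the arithmetic condition in the hypothesis --- so this case cannot occur and $q\nmid B$. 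If instead $q\mid r$, then $B\equiv 12x^2\pmod q$, so $q\mid B$ would force $q\mid x$ (as $q\neq2,3$); but then $q$ divides $x$ and $r$, and hence $y$ by~\eqref{eqn:conssquares1}, contradicting primitivity $\gcd(x,y,r)=1$. Either way $q\nmid B$, which closes the argument.

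The only genuine content beyond valuation bookkeeping is the reciprocity evaluation $\left(\frac{3}{q}\right)=-1$ for $q\equiv\pm5\pmod{12}$; everything else is routine. I expect the main thing to watch is keeping the valuation comparison clean, namely that clearing denominators by $12$ (rather than by $4$, as in Lemma~\ref{lem:1}) leaves $q$ out of the constant factor, which is exactly why the restriction $q\neq2,3$ matters, together with the use of primitivity to dispose of the case $q\mid r$.
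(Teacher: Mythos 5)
Your proposal is correct and follows essentially the same route as the paper: compare $q$-adic valuations in \eqref{eqn:conssquares} to force the bracketed factor to vanish modulo $q$, deduce that $3$ (equivalently $12$) would have to be a quadratic residue modulo $q$, and contradict $q \equiv \pm 5 \pmod{12}$ via reciprocity. The only difference is bookkeeping: the paper asserts outright from primitivity that $q \nmid r$, whereas your separate treatment of the case $q \mid r$ (forcing $q \mid x$ and then $q \mid y$, against $\gcd(x,y,r)=1$) actually justifies that step more carefully than the paper's one-line claim.
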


\begin{proof}
Our assumption on $q$ forces $q \ne 2,3$, 
and equation~\eqref{eqn:conssquares1} affirms that $q \mid y$. 
Since $(x,y)$ is primitive, $q \nmid r$. As $d \equiv 0 \pmod{q}$
and $\ord_{q}(d) \not\equiv 0 \pmod{n}$, equation \eqref{eqn:conssquares}
tells us that
\[
\left(x+\frac{r}{2}\right)^2 \equiv \frac{1}{12} \pmod{q}.
\]
This implies $q \equiv \pm 1 \pmod{12}$ which gives a contradiction.
\end{proof}

Applying Lemmata~\ref{lem:1},~\ref{lem:2} and ~\ref{lem:5mod12} allows us to eliminate $d=4,5,7,9,10$ for all $n\geq 2$,
and $d=8$  with $n \ge 3$. For the proof of Theorem~\ref{thm:main}, it remains to deal
with $d=2,3$ and $6$ for $n\geq 2$, and also with $d=8$ for $n=2$.
The case $d=3$ and $ r\leq \bound$ has been resolved in \cite{KoutsianasPatel} and a table of solutions can be found in that paper.
%
\section{Case: $n=2$}\label{sec:n=2}

In this section, we deal with the case $n=2$ when $d=2,6,8$. 
\begin{Lemma}
Let $d=6$ or $8$ and $n=2$. Then equation~\eqref{eq:main} has no integer solutions.
\end{Lemma}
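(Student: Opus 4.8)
The plan is to reduce each case to showing that a certain conic or ternary quadratic form represents no values of $y^2$, and I expect the whole argument to run through local (congruence) obstructions. Starting from the completed-square form~\eqref{eqn:conssquares}, set $n=2$ so the right-hand side is a perfect square, and substitute the two values $d=6$ and $d=8$ separately.

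For $d=6$, equation~\eqref{eqn:conssquares} becomes $6\bigl((x+\tfrac{7}{2}r)^2 + \tfrac{35}{12}r^2\bigr)=y^2$; clearing denominators via~\eqref{eqn:conssquares1} I would work with $6x^2+42xr+91r^2=y^2$, which is most cleanly handled by multiplying through to obtain $(2x+7r)^2\cdot 6 + 35 r^2$ in an integral form and then completing appropriately. The idea is that $6$ divides $y^2$, forcing $6\mid y$ and hence $36\mid y^2$, while primitivity gives $\gcd(r,6)=1$; reducing the resulting relation modulo a small modulus (I expect $8$ or a well-chosen prime) should produce a contradiction because the surviving quadratic residues cannot match. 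Concretely, I would reduce modulo $4$ (using $r$ odd forces $r^2\equiv 1$) and modulo $3$, combining them to rule out solvability. For $d=8$, Lemma~\ref{lem:1} already handles $n\ge 3$, so only $n=2$ remains; here $\ord_2(8)=3$ and the analogue of the computation in Lemma~\ref{lem:1} gives $2\ord_2(y)=j-1=2$, and I would push this a step further by a direct $2$-adic or mod-$16$ analysis of $D\bigl((2x+9r)^2+\tfrac{(d-1)(d+1)}{3}r^2\bigr)$ with $D=2$, checking that the required square class is inconsistent.

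The single mechanism behind both cases is that after isolating the square-free part of the coefficient (namely $6$ for $d=6$ and $2$ for $d=8$ after the Lemma~\ref{lem:1} substitution), the equation takes the shape $c\cdot(\text{positive definite binary quadratic in } x,r)=y^2$ with $c$ not a perfect square, and the representability of squares by such forms is governed entirely by congruence conditions on the prime divisors of $r$ together with the parity constraints already extracted. So the proof is a finite check of residues: identify the forced divisibilities of $y$, use $\gcd(r,d)=1$ from primitivity, and exhibit one modulus at which no residue of the left side is a quadratic residue.

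The main obstacle will be choosing the right modulus so that the contradiction is clean for both $d=6$ and $d=8$ simultaneously rather than resorting to an ad hoc case split; in particular the $d=8,\,n=2$ case is delicate because the $2$-adic valuation argument of Lemma~\ref{lem:1} gives a consistent valuation ($2\ord_2(y)=2$) and does \emph{not} by itself yield a contradiction, so the obstruction must come from a finer mod-$8$ or mod-$16$ (or mod-$3$) incompatibility rather than from valuations alone. I would therefore expect the $d=8$ analysis to be the technically harder half, and I would verify it by explicitly enumerating the residues of $(2x+9r)^2$ and of $\tfrac{63}{3}r^2=21r^2$ modulo a suitable small number to confirm that their sum, scaled by $2$, never lands in the set of squares.
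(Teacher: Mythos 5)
Your underlying mechanism --- use primitivity to pin down the parity and coprimality of $r$, then exhibit a congruence obstruction --- is the same kind of argument the paper makes, and the moduli you name do work; but as written your plan contains one false claim and leaves the decisive residue checks unperformed. The false claim is the stated ``idea'' for $d=6$: with $d=6$, equation~\eqref{eqn:conssquares1} reads $6x^2+42xr+91r^2=y^2$, and since $91\equiv 1\pmod{6}$ while primitivity forces $\gcd(r,6)=1$, one gets $y^2\equiv r^2\pmod{6}$, hence $\gcd(y,6)=1$ --- the exact opposite of ``$6$ divides $y^2$, forcing $6\mid y$''. Luckily that claim is never needed. With $r$ odd, $6x^2+42xr=6x(x+7r)\equiv 0\pmod{4}$ because $x(x+7r)$ is even, while $91r^2\equiv 3\pmod{4}$; hence $y^2\equiv 3\pmod{4}$, which is impossible. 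So your ``reduce mod $4$'' step succeeds by itself (mod $3$ gives nothing here: both sides are $\equiv r^2$). For $d=8$ your diagnosis is accurate: the valuation count of Lemma~\ref{lem:1} is consistent ($\ord_2(y)=1$) and cannot give the contradiction. Writing $y=2Y$ as in the paper, the equation becomes $(2x+9r)^2+21r^2=2Y^2$ with $r$ odd, and mod $8$ the left side is $1+5\equiv 6$ while $2Y^2\equiv 0$ or $2\pmod{8}$; contradiction, exactly the finer $2$-adic incompatibility you predicted.

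Comparing with the paper: for $d=8$ the paper uses the mod-$3$ variant you mention only in passing ($2$ is not a square modulo $3$, with primitivity disposing of the subcase $3\mid Y$). For $d=6$ the paper takes a genuinely different route: it multiplies by $2$ to get $3(2x+7r)^2+35r^2=2y^2$ (note your form ``$(2x+7r)^2\cdot 6+35r^2$'' is off by a factor) and reduces modulo $7$, where $6$ is a non-residue; this forces $7\mid y$ and $7\mid(2x+7r)$, hence $7\mid\gcd(x,y,r)$, contradicting primitivity. Your $2$-adic obstruction is more elementary --- no quadratic-residue computation at an odd prime, and no primitivity input beyond $r$ being odd --- so once you delete the false $6\mid y$ claim and insert the two residue computations above, your proof is complete and, for $d=6$, slightly simpler than the paper's.
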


\begin{proof}
Let $d=6$, $n=2$. We rewrite equation~\eqref{eq:main} as
\[
3(2x+7r)^2 + 35r^2 = 2y^2.
\]
As $6$ is a non-square modulo $7$, we see that $7 \mid (2x+7r)$ and $7 \mid y$
which quickly contradicts primitivity.

When $d=8$, $n=2$, we rewrite equation~\eqref{eq:main} as
\[
2((2x+9r)^2 + 21r^2) = y^2.
\]
Writing $y = 2Y$ we obtain
\[
(2x+9r)^2 + 21r^2 = 2Y^2
\]
and considering the equation modulo $3$, we see that $2$ must be a square modulo $3$ and arrive at a contradiction.
\end{proof}

It remains to deal with $d=2$, $n=2$. Here we prove the claim made
about this case in the statement of Theorem~\ref{thm:main}.

\begin{proof}[Proof of Theorem~\ref{thm:main} for $d=2$, $n=2$]
In this case \eqref{eq:main} is
\[
2 x^2+6xr+5r^2=y^2.
\]
We see from this that $r$ is odd (otherwise the solution is imprimitive).
We may rewrite this as
\begin{equation}\label{eq:n2d2}
(2x+3r)^2-2y^2=-r^2.
\end{equation}
It follows that $2$ is a quadratic residue modulo any prime
divisor of $r$ and so they are all of the form $\pm 1 \mod{8}$,
and so split in $\ZZ[\sqrt{2}]$. Write
$r=q_1^{t_1} \cdots q_s^{t_s}$ as in the theorem where
the $q_i$ are distinct primes. For each $i$
let $\mathfrak{q}_i \in \ZZ[\sqrt{2}]$ satisfy
$\norm(\mathfrak{q}_i)=q$; these $\mathfrak{q}_i$
are primes of $\ZZ[\sqrt{2}]$.
Thus the prime divisors of $2x+3r+y\sqrt{2}$, $2x+3r-y\sqrt{2}$
are among $\mathfrak{q}_i$, $\overline{\mathfrak{q}_i}$.
As the solution is primitive 
$2x+3r+y\sqrt{2}$, $2x+3r-y\sqrt{2}$ are coprime in $\ZZ[\sqrt{2}]$.
Thus $2x+3r+y\sqrt{2}$ is divisible by either $\mathfrak{q}_i$
or $\overline{\mathfrak{q}_i}$ but not both, and moreover, the valuation
at this prime is $2 t_i$. Thus $2x+3r+y\sqrt{2}=\epsilon \cdot \mathfrak{r}^2$
where $\epsilon$ is a unit, and $\mathfrak{r}$ is as in the statement of 
the theorem. Taking norms and comparing to \eqref{eq:n2d2}
we see that $\norm(\epsilon)=-1$, so $\epsilon=\pm (1+\sqrt{2})^{2k+1}$
for some integer $k$. This completes the proof in this case.
\end{proof}

\begin{Remark}
As we see infinitely many solutions arising in the case $d=2, n=2$, 
it remains to solve equation~\eqref{eq:main} with $n=4$. This will be done in the next section.
\end{Remark}

\section{Case: $n=4$}\label{sec:n=4}
%
In this section, we find all integer solutions to the equation:
\[
(x+r)^2 + (x+2r)^2 = y^4.
\]

We note that since $\gcd(x,r) =1$ we must have $\gcd(x+r,x+2r)=1$.
We denote $\sqrt{-1} = i$.
Applying a descent argument over the Gaussian integers, we obtain:
\begin{equation}\label{eqn:desci}
(x+2r) + i(x+r) = \epsilon \alpha^4
\end{equation}
where $\epsilon \in \{\pm1, \pm i\}$ is a unit and $\alpha \in \ZZ[i]$.
We let $\alpha = u+iv$, where $u,v \in \ZZ$.

{\bf Case 1:} The unit $\epsilon = \pm 1$. 
We equate real and imaginary parts of equation~\eqref{eqn:desci} to obtain the equations:
\[
\begin{cases}
x+2r &= \pm (u^4 - 6u^2v^2 + v^4),  \\
x +r &= \pm 4(u^3v - uv^3).
\end{cases}
\]
Subtracting one from the other, we get: 
\[
\pm r = u^4 - 4u^3v - 6u^2v^2 +4uv^3 + v^4. 
\]

{\bf Case 2:} The unit $\epsilon = \pm i$. Equating real and imaginary parts of equation~\eqref{eqn:desci}, we obtain:
\[
\begin{cases}
x+2r &= \pm 4(-u^3v + uv^3), \\
x + r&= \pm (u^4 - 6u^2v^2 + v^4) .
\end{cases}
\]
Subtracting one from the other, we get:
\[
\pm r = u^4 + 4u^3v - 6u^2v^2 - 4uv^3 + v^4. 
\]

In both cases, when $r$ has a fixed value, we obtain homogeneous equations of degree 4. Using \texttt{Magma's} Thue solver, we determine all integer solutions $(u,v)$, whereby we recover the integer solutions $(x,|y|,4)$ to equation~\eqref{eq:main} for $d=2$.  
These are recorded in Table~\ref{table:solutionsAP24}.

\section{Primitive prime divisors of Lucas and Lehmer sequences} {\label{section: Primitive prime divisors of Lucas and Lehmer sequences}}

A \textit{Lehmer pair} is a pair of algebraic integers $\alpha,\beta$,  such that $(\alpha +\beta)^2$ and $\alpha\beta$ are non--zero coprime rational integers and $\alpha/\beta$ is not a root of unity. The \textit{Lehmer sequence} 
associated to the Lehmer pair $(\alpha,\beta)$ is
\begin{equation*}
\tu_n=\tu_n(\alpha,\beta)=\begin{cases}
\frac{\alpha^n-\beta^n}{\alpha-\beta}, & \text{if $n$ is odd},\\
\frac{\alpha^n-\beta^n}{\alpha^2-\beta^2}, & \text{if $n$ is even}.
\end{cases}
\end{equation*}
A prime $p$ is called a \textit{primitive divisor} of $\tu_n$ if
it divides $\tu_n$ but does not divide 
$(\alpha^2-\beta^2)^2\cdot\tu_1\cdots\tu_{n-1}$. 
We shall make use of the following celebrated theorem~\cite{BHV}.
\begin{Th}[Bilu, Hanrot and Voutier]\label{thm:non_defective}
Let $\alpha$, $\beta$ be a Lehmer pair. Then
$\tu_n(\alpha,\beta)$ has a primitive divisor for all $n>30$,
and for all prime $n>13$.
\end{Th}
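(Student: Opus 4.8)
The plan is to follow the classical strategy for primitive divisor results, going back to Zsygmondy and Birkhoff--Vandiver and sharpened for Lehmer sequences successively by Stewart, Shorey, Voutier and finally Bilu, Hanrot and Voutier: reduce the existence of a primitive divisor of $\tu_n$ to a comparison between an elementary upper bound (valid when the primitive divisor is assumed absent) and a transcendence-theoretic lower bound, and then clear the finitely many remaining $n$ by explicit computation. Since this is exactly the content of \cite{BHV}, in the present paper one simply invokes their theorem; what follows is the shape of that argument.

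First I would record the cyclotomic factorization. Writing $\Phi_m$ for the $m$-th cyclotomic polynomial and $\Phi_m^*(\alpha,\beta)$ for its suitably homogenized value at the Lehmer pair (arranged so as to be a rational integer), one has $\tu_n = \prod_{d \mid n} \Phi_d^*(\alpha,\beta)$, with the odd/even adjustment in the definition of $\tu_n$ absorbed into this product. The point is that a prime $p \nmid n$ divides $\tu_n$ only if it divides $\Phi_n^*(\alpha,\beta)$, while a prime dividing $n$ contributes at most a bounded power. Hence $\tu_n$ fails to have a primitive divisor precisely when every prime factor of $\Phi_n^*(\alpha,\beta)$ divides $n$ or already occurs in an earlier term, and a short argument with the rank of apparition then forces $|\Phi_n^*(\alpha,\beta)|$ to be very small --- bounded above essentially by a constant times the largest prime factor of $n$.

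Next I would play this against a lower bound. Up to controlled factors $\Phi_n^*(\alpha,\beta) = \prod_{\gcd(k,n)=1}(\alpha - \zeta_n^k \beta)$, whose archimedean size grows like $\max(|\alpha|,|\beta|)^{\varphi(n)}$ except insofar as some factor $\alpha - \zeta_n^k\beta$ is anomalously small, i.e. insofar as $\alpha/\beta$ lies close to an $n$-th root of unity. The delicate case is that of complex conjugate pairs, where $|\alpha| = |\beta|$ and the entire size of $\tu_n$ is controlled by the distance of $\alpha/\beta$ from roots of unity; bounding this distance below is exactly a problem about linear forms in two logarithms. A sharp estimate of Laurent--Mignotte--Nesterenko type yields a lower bound for $|\Phi_n^*(\alpha,\beta)|$ that grows with $n$, and comparing it with the smallness forced above confines $n$ to an explicit finite range.

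The hard part will be making the linear-forms-in-logarithms estimate sharp enough that the resulting cutoff is small enough to be finished by machine: this is precisely where the difficulty of \cite{BHV} lies, and it is what the refined two-logarithm estimates (rather than the cruder general Baker bounds) buy, bringing the threshold down to $n > 30$, and to $n > 13$ when $n$ is prime. For the finitely many surviving $n$ I would, as they do, parametrize Lehmer pairs by their integer invariants $(\alpha+\beta)^2$ and $\alpha\beta$, translate the condition ``$\tu_n$ has no primitive divisor'' into a Thue equation (or a finite system of such) in these invariants for each small $n$, and solve them explicitly. This enumerates the complete, finite list of defective pairs and confirms that none persist beyond the stated thresholds.
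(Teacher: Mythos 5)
The paper does not prove this statement at all: it is stated verbatim as the celebrated theorem of Bilu, Hanrot and Voutier and simply cited from \cite{BHV}, to be used as a black box in the later sections. Your proposal takes essentially the same route --- invoke \cite{BHV} --- and your sketch of their actual argument (cyclotomic factorization forcing $\Phi_n^*(\alpha,\beta)$ to be small in the defective case, a lower bound via linear forms in two logarithms to confine $n$ to a finite range, and Thue-equation computations to clear the small $n$) is a faithful outline of the original proof, so there is nothing to correct.
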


\section{An arithmetic progression with two terms} \label{sec:2AP}

In this section, we find all integer solutions to equation~\eqref{eq:main} for $d=2$, $1\leq r \leq \bound$ and $n$ an odd prime. 
We rewrite equation~\eqref{eq:main}  as  
\begin{equation*}
2x^2 + 6xr  + 5r^2 =y^n.
\end{equation*}

Multiplying by $2$ and completing the square, we obtain:
\begin{equation}\label{eqn:AP2}
(2x + 3r)^2 + r^2 = 2y^n.
\end{equation}

We apply the following general theorem.
\begin{Th}[Theorem 1 in \cite{AMLST}]\label{thm:AMLST}
Let $C$ be a positive integer satisfying $C \equiv 1 \pmod{4}$ and write 
$C = cd^2$ where $c$ is square--free. Suppose that $(x,y)$ is a solution to the equation
\begin{equation*}
x^2 + C = 2y^p, \quad x,~y \in \ZZ^{+}, \quad \gcd(x,y)=1,
\end{equation*}
where $p\geq 5$ is a prime. Then either,
\begin{enumerate}[(i)]
\item $x=y=C=1$, or
\item $p$ divides the class number of $\QQ(\sqrt{-c})$, or
\item $p=5$ and $(C,x,y) = (9,79,5), (125,19,3), (125,183,7), (2125,21417,47)$, or
\item $p \mid \left( q - \left(\frac{-c}{q}\right)\right)$, where $q$ is some odd prime such that $q\mid d$ and $q\nmid c$.
\end{enumerate}

\end{Th}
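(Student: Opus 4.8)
The plan is to reduce the equation $x^2 + C = 2y^p$ to a statement about primitive divisors of a Lehmer sequence, following the standard route for equations of the shape $X^2 + A = B Y^p$ and using exactly the machinery of Theorem~\ref{thm:non_defective}. First I would work in the imaginary quadratic field $K = \QQ(\sqrt{-c})$. Writing $C = cd^2$ and factoring, the equation reads
\[
(x + d\sqrt{-c})(x - d\sqrt{-c}) = 2y^p
\]
in $\OK$. Since $C \equiv 1 \pmod 4$ while the left-hand side norm $x^2 + C$ is even, $x$ is forced to be odd, so $x^2 \equiv 1 \pmod 8$ and hence $2y^p \equiv 2 \pmod 4$; this shows $y$ is odd and $\ord_2(2y^p) = 1$. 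Taking norms, $\norm(x + d\sqrt{-c}) = 2y^p$ is divisible by $2$ but not $4$, which rules out $2$ being inert and pins it down as split or ramified in $K$.

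Next I would pass to ideals. Using $\gcd(x,y) = 1$ together with the single factor of $2$, the conjugate factors are essentially coprime away from the prime $\mathfrak{p}_2$ above $2$, so the ideal $(x + d\sqrt{-c})$ should equal $\mathfrak{p}_2 \cdot \mathfrak{a}^p$ for a suitable ideal $\mathfrak{a}$. The ideal $\mathfrak{a}^p$ is principal; if $p \nmid h_K$ then $\mathfrak{a}$ is itself principal, and otherwise we land in case (ii). This is the precise point at which the class-number hypothesis enters. Absorbing the finitely many units of the imaginary quadratic field, I obtain an element identity
\[
x + d\sqrt{-c} = (\text{unit}) \cdot \gamma \cdot \alpha^p,
\]
where $\gamma$ carries the contribution of the prime above $2$. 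Subtracting the conjugate relation then yields $2 d\sqrt{-c} = \gamma\alpha^p - \overline{\gamma}\,\overline{\alpha}^p$, from which one extracts a Lehmer pair and recognizes the governing quantity (up to the even/odd normalization) as a term $\tu_p$ of the associated Lehmer sequence.

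Invoking Theorem~\ref{thm:non_defective}, for prime $p > 13$ this $\tu_p$ has a primitive divisor $q$. The coprimality and norm constraints imposed by the equation force $q$ to divide $d$ (and not $c$), while the rank-of-apparition property of primitive divisors gives $p \mid \bigl(q - \left(\frac{-c}{q}\right)\bigr)$, which is exactly case (iv). For the residual primes $p = 5, 7, 11, 13$ not covered unconditionally by Theorem~\ref{thm:non_defective}, I would instead run the equation against the explicit finite list of defective Lehmer pairs tabulated in \cite{BHV}; this produces the sporadic solutions recorded in (iii) (all occurring at $p = 5$), while the fully degenerate case, where $\alpha/\beta$ is a root of unity, yields the trivial solution (i).

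The hard part will be the careful bookkeeping at the prime $2$: because the factor $2$ on the right does not disappear, the descent must track the single prime $\mathfrak{p}_2$ correctly and distinguish the ramified and split behaviours of $2$ in $K$ (governed by $-c \bmod 8$), so that the resulting $\gamma$ and the Lehmer pair are genuine coprime algebraic integers meeting the definition in $\S\ref{section: Primitive prime divisors of Lucas and Lehmer sequences}$. Equally delicate, though finite, is the explicit verification against the exceptional Lehmer pairs for $p \le 13$, which is what certifies that (iii) is the complete list of sporadic solutions.
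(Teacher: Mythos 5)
You should first be aware that the paper contains no proof of this statement to compare against: Theorem~\ref{thm:AMLST} is quoted verbatim from \cite{AMLST} and used as a black box in \S\ref{sec:2AP}. The closest thing to an in-paper proof of such a result is the self-contained argument for $d=6$ in \S\ref{sec:6AP}. Judged on its own terms, your outline does follow the strategy of the cited source (and of \S\ref{sec:6AP}): factor over $K=\QQ(\sqrt{-c})$, descend to a $p$-th power identity, extract a Lehmer pair, invoke Theorem~\ref{thm:non_defective} for $p>13$, and compare against the defective pairs tabulated in \cite{BHV} for $p\in\{5,7,11,13\}$.

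However, your class-group step has a genuine error. Since $C\equiv 1\pmod{4}$ forces $d$ odd and hence $c\equiv 1\pmod{4}$, the prime $2$ always ramifies in $K$ (there is no split case to distinguish): $(2)=\mathfrak{p}_2^2$ with $\OK=\ZZ[\sqrt{-c}]$. Crucially, $\mathfrak{p}_2$ is in general not principal (take $c=5$), and then neither of your claims ``$\mathfrak{a}^p$ is principal'' and ``$p\nmid h_K$ implies $\mathfrak{a}$ principal'' holds: from $(x+d\sqrt{-c})=\mathfrak{p}_2\mathfrak{a}^p$ one only gets $[\mathfrak{a}]^p=[\mathfrak{p}_2]^{-1}$, a class of order dividing $2$, so $[\mathfrak{a}]$ may be a nontrivial $2$-torsion class even when $p\nmid h_K$, and your element identity $x+d\sqrt{-c}=\epsilon\cdot\gamma\cdot\alpha^p$ (which needs both $\mathfrak{p}_2$ and $\mathfrak{a}$ principal) is unavailable. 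The repair --- exactly what the paper does for $d=6$ with $\fa=\fp_2\fp_3$ --- is to work with the single ideal $\mathfrak{p}_2\mathfrak{a}$: its $p$-th power equals $\mathfrak{p}_2^{p-1}\cdot(x+d\sqrt{-c})\OK=\bigl(2^{(p-1)/2}(x+d\sqrt{-c})\bigr)$, hence is principal, so $p\nmid h_K$ gives $\mathfrak{p}_2\mathfrak{a}=(\gamma)$ and $\gamma^p=\pm 2^{(p-1)/2}(x+d\sqrt{-c})$ (for $c>3$, where the units are $\pm 1$; the fields $\QQ(\sqrt{-1})$ and $\QQ(\sqrt{-3})$ need separate care and account for case (i)). The Lehmer pair is then $\bigl(\gamma/\sqrt{2},\bar{\gamma}/\sqrt{2}\bigr)$, mirroring $\alpha=\gamma/\sqrt{6}$ in \S\ref{sec:6AP}, and subtracting conjugate relations gives $(\alpha^p-\beta^p)/(\alpha-\beta)=d/v$ where $\gamma=u+v\sqrt{-c}$; with this correction your endgame (a primitive divisor $q\mid d$, $q$ odd, $q\nmid c$, and $p\mid q-\left(\frac{-c}{q}\right)$ via the multiplicative order of $\gamma/\bar{\gamma}$ modulo a prime above $q$) goes through exactly as in Lemma~\ref{lem:B6}.
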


\subsection{Proof of Theorem~\ref{thm:main} for $d=2$}
We rewrite \eqref{eqn:AP2} as 
\[
\lvert 2x+3r\rvert^2+r^2=2y^n
\]
and apply Theorem~\ref{thm:AMLST}. Case (i)
gives the solutions $x=-1$ or $-2$, $r=1$, $y=1$ and $n$ arbitrary.
We suppose we are not in this case.
Let
\begin{equation*}
B = \{3,5\} \cup \left\{\text{$p$ odd prime} \; :\; p \mid \left( q - \left(\frac{-c}{q}\right)\right),\, \text{for some odd prime $q \mid r$}\right\}. 
\end{equation*} 
Note that if $r=1$ then $B=\{3,5\}$.
Theorem~\ref{thm:AMLST} asserts  
$n \in B$. Thus for every $2 \le r \le \bound$ we have finitely many possible
values of the prime exponent $n$.
We will explain how to solve \eqref{eqn:AP2} for a fixed $r$
and fixed exponent $n$. From \eqref{eqn:AP2} we obtain
\[
2x+3r+i r= (1+i) \alpha^n
\]
for some $\alpha \in \ZZ[i]$. Subtracting this equation from
its conjugate gives
\[
(1+i)\alpha^n-(1-i) \bar{\alpha}^n=2r i.
\]
Dividing by $1+i$ we have
\begin{equation}\label{eqn:thue}
\alpha^n+i \bar{\alpha}^n=(1+i)r.
\end{equation}
Let $\alpha=u+iv$ with $u$, $v \in \ZZ$. If $n \equiv 1 \pmod{4}$
then $i=i^n$ and if $n \equiv -1 \pmod{4}$ then $i=(-i)^n$. 
In the former case $\alpha+i\bar{\alpha}=(1+i)(u+v)$ is a factor
of the left-hand side of \eqref{eqn:thue}, and in the latter case
$\alpha-i\bar{\alpha}=(1-i)(u-v)$ is a factor. We deduce that $(u+v) \mid r$
or $(u-v) \mid r$ according to whether $n \equiv 1$ or $-1 \pmod{4}$.
Thus for each $1 \le r \le 10^4$ and for each $n \in B$ and for
each $t \mid r$, we let $u\pm v=t$, and we need to simply solve 
for $u$, $v$. But equation \eqref{eqn:thue} is now a polynomial
equation $v$ after letting $\alpha=u+iv=(t\mp v)+iv$.
We wrote a simple \texttt{Magma} script that solved these polynomial
equations and deduced the corresponding solutions to \eqref{eqn:AP2}.
This gives the solutions $(x,y,n)$ as in Table~\ref{table:solutionsAP2}. 

\section{An arithmetic progression with six terms} \label{sec:6AP}

In this section, we find all integer solutions to equation~\eqref{eq:main} for $d=6$ and $n$ an odd prime. 
We rewrite equation~\eqref{eq:main} as
\begin{equation}{\label{eqn:case d=6}}
X^2 + 3\cdot 5\cdot 7 r^2 = 6y^n,
\end{equation}
where we let $X = 6x + 21r$ for ease of notation. We note here that $2,3 \nmid r$ else we contradict the assumption that $(x,y)$ is primitive.
Let $K=\QQ(\sqrt{-105})$ and  its ring of integers, $\OK = \ZZ[\sqrt{-105}]$. This has class group isomorphic to $(\ZZ/2\ZZ)^3$. We can factorise equation~\eqref{eqn:case d=6} in $\OK$ as follows
\[
(X + r\sqrt{-105})(X-r\sqrt{-105})=6y^n.
\]
Let us write $\fp_2$ and $\fp_3$ for the prime ideals above 2 and 3, respectively. Let $\fa=\fp_2 \fp_3$. We write
\[
(X+ r\sqrt{-105})\OK  
 = \fa^{1-n} \cdot (\fa\fz)^n
 = (6^{(1-n)/2})  (\fa\fz)^n,
\]
where $\fa \fz$ is a principal ideal of $\OK$. Indeed,  $[\fa\fz]^n = 1$ in the class group. Therefore the class $[\fa\fz]$ has order dividing $n$ or 1, as $n$ is an odd prime. Since the class group has order $8$, it means that the order of $[\fa \fz]$ must be 1. 
We therefore write $\fa \fz =(\gamma)\OK$ where $\gamma = u + v \sqrt{-105}\in \OK$ with $u,v \in \ZZ$. 
If required, we may swap $\gamma$ with $-\gamma$ to obtain
\begin{equation}{\label{eqn:individual factor}}
X + r \sqrt{-105} = \frac{\gamma^n}{6^{(n-1)/2}}.
\end{equation}
Subtracting the conjugate equation from the one above, we get
\begin{equation}{\label{eqn:subtract conjugate}}
\frac{\gamma^n}{6^{(n-1)/2}} - \frac{{\bar\gamma}^n}{6^{(n-1)/2}} = 2r\sqrt{-105},
\end{equation}
or equivalently,
\begin{equation*}
\frac{\gamma^n}{6^{n/2}} - \frac{{\bar\gamma}^n}{6^{n/2}} = r\sqrt{-70}.
\end{equation*}
Consider a quadratic extension, $L/K$, where $L=\QQ(\sqrt{-105}, \sqrt{6}) = \QQ(\sqrt{-70}, \sqrt{6})$. We write $\OL$ for its ring of integers and set
$
\alpha = \gamma/\sqrt{6} ,\;  \beta= \bar{\gamma}/\sqrt{6}.
$
Thus equation~\eqref{eqn:subtract conjugate} becomes
\begin{equation}{\label{eqn:6APalphabeta}}
\alpha^n - \beta^n = r\sqrt{-70}.
\end{equation}

\begin{Lemma}{\label{lem:6APLehmer}}
Let $\alpha,\beta$ be as above. Then $\alpha$ and $\beta$ are algebraic integers. Moreover, $(\alpha+\beta)^2$ and $\alpha\beta$ are non--zero, coprime, rational integers and $\alpha/\beta$ is not a unit.
\end{Lemma}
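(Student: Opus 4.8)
The plan is to translate the three assertions into elementary statements about the integers $u,v$ in $\gamma=u+v\sqrt{-105}$ and read everything off. Since $\fz$ is integral, $\fa=\fp_2\fp_3\mid(\gamma)=\fa\fz$, so in particular $\fp_3\mid\gamma$; reducing modulo $\fp_3=(3,\sqrt{-105})$ gives $\gamma\equiv u\pmod{\fp_3}$, hence $3\mid u$, and similarly $\fp_2\mid\gamma$ forces $u\equiv v\pmod2$. Set $w=u/3\in\ZZ$. Then $\alpha+\beta=(\gamma+\overline{\gamma})/\sqrt6=2u/\sqrt6=w\sqrt6$, an algebraic integer, while $\alpha\beta=\gamma\overline{\gamma}/6=\norm(\gamma)/6$. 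Taking norms in $\fa\fz=(\gamma)$ and in $(X+r\sqrt{-105})=\fa\fz^n$ gives $\norm(\gamma)=6\norm(\fz)$ and $y=\norm(\fz)$, so $\alpha\beta=y\in\ZZ$. As $\alpha,\beta$ are the roots of the monic polynomial $t^2-(w\sqrt6)\,t+y$ with algebraic-integer coefficients, they are algebraic integers, and $(\alpha+\beta)^2=6w^2\in\ZZ$. This settles the first claim and the integrality in the second.

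For nonvanishing, $\alpha\beta=y\ne0$ since $6y^n=X^2+105r^2>0$, and $(\alpha+\beta)^2=6w^2\ne0$ because $u\ne0$: indeed $X=3(2x+7r)$ is odd (as $2\nmid r$), so $X\ne0$, whereas $u=0$ would make $\gamma^n$ a rational multiple of $\sqrt{-105}$ and force the real part $X$ of $\gamma^n/6^{(n-1)/2}$ to vanish. The heart of the matter is that $6w^2$ and $y$ are coprime. I would first rule out $2,3,5,7$ as divisors of $y$: for $p=2,3$ this is direct from $2,3\nmid r$ (reduce \eqref{eqn:case d=6} modulo $4$, resp. $9$); for $p=5,7$, suppose $p\mid y$, so $p\mid X$ since $p\mid105$, and reduce modulo $p^2$ (using $105\equiv p\pmod{p^2}$ and $n\ge2$) to force $p\mid r$, hence $p\mid x$ and then $p\mid x,y,r$, contradicting $\gcd(x,y,r)=1$. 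Thus any common prime factor $p$ of $6w^2$ and $y$ satisfies $p\notin\{2,3,5,7\}$ and $p\mid w$, so $p\mid u$; since $\norm(\gamma)=u^2+105v^2=6y$ and $p\nmid105$, also $p\mid v$. Then $(p)\mid(\gamma)=\fa\fz$ with $(p)$ coprime to $\fa$, so $(p)\mid\fz$ and $(p)^n\mid\fa\fz^n=(X+r\sqrt{-105})$; reading off coordinates in $\ZZ[\sqrt{-105}]$ gives $p^n\mid X$ and $p^n\mid r$, whence $p\mid x,y,r$, again impossible. Hence $\gcd\big((\alpha+\beta)^2,\alpha\beta\big)=1$.

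Finally, for the ratio, the only roots of unity in the totally complex biquadratic field $L=\QQ(\sqrt6,\sqrt{-70})$ are $\pm1$, since $L$ contains neither $\sqrt{-1}$ nor $\sqrt{-3}$. If $\alpha/\beta=1$ then $\alpha^n-\beta^n=0$, contradicting \eqref{eqn:6APalphabeta} as $r\ge1$; if $\alpha/\beta=-1$ then $\alpha+\beta=0$, contradicting the nonvanishing above. So $\alpha/\beta$ is not a root of unity. Moreover $\alpha/\beta=\gamma/\overline{\gamma}$ has absolute value $1$ at every archimedean place (where $\gamma$ and $\overline{\gamma}$ are complex conjugates), so were it a unit it would be an algebraic integer with all conjugates on the unit circle, hence a root of unity by Kronecker's theorem; thus it is not a unit either, which is the stated conclusion. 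The main obstacle I anticipate is the coprimality step at the ramified primes $2,3,5,7$, where a naive valuation comparison is inconclusive and one must pass to congruences modulo $p^2$ and invoke primitivity.
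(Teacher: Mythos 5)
Your proof is correct, but it executes each of the three key steps differently from the paper, and the comparison is instructive. For integrality the paper simply observes that $\fa\cdot\OL=\sqrt{6}\,\OL$ and $\fa\mid\gamma,\bar\gamma$, so $\alpha=\gamma/\sqrt{6}$ and $\beta=\bar\gamma/\sqrt{6}$ lie in $\OL$ at once; your route via the monic polynomial $t^2-w\sqrt{6}\,t+y$ is equally valid and has the bonus of pinning down $\alpha\beta=\norm(\fz)=y$ exactly, a fact the paper never makes explicit (it only argues that $\gamma\bar\gamma/6$ is a rational integer). That identification pays off in the coprimality step, which is where the two arguments genuinely part ways. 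The paper takes a prime $\fq$ of $\OL$ dividing both $(\alpha+\beta)^2$ and $\alpha\beta$, deduces $\fq\mid\alpha,\beta$, hence $\fq^n$ divides both $(y^n)\OL$ and $(\alpha^n-\beta^n)\OL=(r\sqrt{-70})\OL$, and contradicts primitivity in a few lines --- but this leaves implicit the bookkeeping at the ramified primes $2,3,5,7$, where $\ord_{\fq}(\sqrt{-70})>0$ and a naive valuation count is not immediately conclusive; these are exactly the cases you isolate and dispatch by congruences modulo $p^2$. Your version --- first rule out $p\in\{2,3,5,7\}$ dividing $y$, then for a common divisor $p\ge 11$ push $p\mid u,v$ through $(p)\mid\fz$ to get $p^n\mid X$ and $p^n\mid r$ --- is longer but stays entirely inside $\OK$ and makes every case explicit. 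Your non-vanishing argument ($X$ is odd, hence $X\neq 0$) is also cleaner than the paper's appeal to coprimality after $2x=-7r$. For the final claim the paper notes that $\alpha/\beta=\gamma/\bar\gamma$ lies in $K$, so if it were a unit it would lie in $\OK^{*}=\{\pm 1\}$, forcing $u=0$ or $v=0$; your detour through the roots of unity of $L$ plus Kronecker's theorem reaches the same conclusion.

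One small repair in that last step: your justification that the only roots of unity in $L$ are $\pm 1$ (absence of $\sqrt{-1}$ and $\sqrt{-3}$) does not exclude fifth or tenth roots of unity, which could a priori generate a quartic subfield; one must add that $\QQ(\zeta_5)$ is cyclic of degree $4$ while $L$ is biquadratic (or that $\sqrt{5}$ is not among the quadratic subfields $\QQ(\sqrt{6})$, $\QQ(\sqrt{-70})$, $\QQ(\sqrt{-105})$). Simpler still: since $\alpha/\beta=\gamma/\bar\gamma\in K$, any unit or root of unity it could equal already lies in the imaginary quadratic field $K\neq\QQ(i),\QQ(\sqrt{-3})$, whose only roots of unity are $\pm 1$; this removes the need to discuss $L$ and Kronecker's theorem altogether and is essentially the paper's route.
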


\begin{proof} We observe that $\fa \cdot \OL =\sqrt{6}\OL$. By definition, $\fa\mid \gamma, \bar{\gamma}$ and hence $\alpha, \beta$ are algebraic integers. Let $\gamma=u+v\sqrt{-105}$ with $u,v\in\ZZ$. Then
\[
(\alpha+\beta)^2=\frac{2u^2}{3}.
\]
Since $\fp_3\mid \gamma, \sqrt{-105}$ we have $\fp_3\mid u$ and so $3\mid u$. Hence, $(\alpha+\beta)^2\in \ZZ$.
If $(\alpha+\beta)^2=0$ then $u=0$. However, from equation~\eqref{eqn:individual factor} and the fact that $n$ is odd, we obtain $X = 6x + 21 r=0$, hence $2x = -7r$. This contradicts the pairwise co-primality of $x,y,r$.
Thus $(\alpha + \beta)^2$ is a non-zero rational integer.
Moreover, $\alpha\beta=\gamma\bar{\gamma}/6$ is a non--zero rational integer since $3 \mid u$ and $\fp_2 \mid \gamma, \bar{\gamma}$.

We now check that $(\alpha+\beta)^2$ and $\alpha\beta$ are coprime. Suppose they are not coprime. Then there exists a prime $\fq$ of $\OL$ which divides both. 
Then $\fq$ divides $\alpha,\beta$ and from equations~\eqref{eqn:individual factor},\eqref{eqn:case d=6} and \eqref{eqn:6APalphabeta}, we see that $\fq^n$ divides $(y^n)\OL$ and $(r\sqrt{-70})\OL$. Since $\ord_{\fq}(r\sqrt{-70}) \geq n$, with $n$ an odd prime,  we contradict our assumption of $(x,y)$ being primitive.

Finally, we show that $\alpha/\beta=\gamma/\bar\gamma\in\OK$ is not a unit. If it were so, then since the units in $K$ are $\pm 1$ we obtain $\alpha=\pm\beta$. This implies that either $u=0$ or $v=0$. We have seen earlier that we cannot have $u=0$. Substituting $v=0$ into equation~\eqref{eqn:individual factor}, we obtain $r=0$ and again arrive at a contradiction.
\end{proof}

Lemma~\ref{lem:6APLehmer} tells us that $(\alpha,\beta)$ is indeed a Lehmer pair. We denote by $\tu_k$ the associated Lehmer sequence. We may rewrite equation~\eqref{eqn:6APalphabeta} as
%
\begin{equation*}
\left( \frac{\alpha^n -\beta^n}{\alpha-\beta} \right)\left( \frac{\alpha -\beta}{\sqrt{-70}}\right) = r.
\end{equation*}

Hence, we have 
\begin{equation}{\label{eqn:r prime}}
\frac{\alpha^n -\beta^n}{\alpha-\beta} = \frac{r}{v}=r^\prime.
\end{equation}

\begin{Lemma}\label{lem:B6}
Suppose $n > 13$. Then there is a prime $q \mid r$ such that $q \nmid 210$, 
and $n \mid B_q$ where 
\[
B_q=\begin{cases}
q-1 & \text{if $\left(\frac{-105}{q}\right)=1$}\\
q+1 & \text{if $\left(\frac{-105}{q}\right)=-1$}.
\end{cases}
\]
\end{Lemma}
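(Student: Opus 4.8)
The plan is to combine the Primitive Divisors Theorem (Theorem~\ref{thm:non_defective}) with a rank-of-apparition argument for the ratio $\gamma/\bar\gamma$. First I would apply Theorem~\ref{thm:non_defective} to the Lehmer pair $(\alpha,\beta)$ of Lemma~\ref{lem:6APLehmer}: since $n$ is a prime with $n>13$, the term $\tu_n$ has a primitive divisor $q$. By \eqref{eqn:r prime} we have $\tu_n=r'=r/v$ with $v\in\ZZ$, so $\tu_n\mid r$ and hence $q\mid r$. To see that $q\nmid 210$, recall that $(\alpha+\beta)^2=2u^2/3$ and compute $(\alpha-\beta)^2=-70v^2$ from \eqref{eqn:6APalphabeta}, so that $(\alpha^2-\beta^2)^2=(\alpha+\beta)^2(\alpha-\beta)^2$ is a nonzero integer divisible by each of $2,3,5,7$ (using $3\mid u$). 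By definition a primitive divisor satisfies $q\nmid(\alpha^2-\beta^2)^2$, and hence $q\notin\{2,3,5,7\}$.

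Next I would convert the divisibility $q\mid \tu_n$ into a statement about the multiplicative order of $\theta:=\gamma/\bar\gamma\in K^{\times}$. Dividing \eqref{eqn:subtract conjugate} by $\gamma-\bar\gamma=2v\sqrt{-105}$ gives $(\gamma^n-\bar\gamma^n)/(\gamma-\bar\gamma)=6^{(n-1)/2}\,\tu_n$, an element of $\OK$. Fix a prime $\fq$ of $\OK$ above $q$. Since $q\nmid 6$ and $q\mid\tu_n$, we get $\fq\mid(\gamma^n-\bar\gamma^n)/(\gamma-\bar\gamma)$. I would then check two coprimality conditions modulo $\fq$: that $q\nmid\alpha\beta$ (the standard fact $\gcd(\tu_n,\alpha\beta)=1$ for Lehmer sequences, which gives $\fq\nmid\gamma,\bar\gamma$ as $q\nmid 6$); and that $\fq\nmid(\gamma-\bar\gamma)$, for otherwise $q\mid v$ and then $q\mid(\alpha-\beta)^2=-70v^2$, contradicting $q\nmid(\alpha^2-\beta^2)^2$. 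With these in hand, $\fq\mid(\gamma^n-\bar\gamma^n)$ forces $\theta^n\equiv 1\pmod{\fq}$, while $\theta\not\equiv 1\pmod{\fq}$. As $n$ is prime, $\theta$ therefore has exact order $n$ in $(\OK/\fq)^{\times}$.

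Finally I would read off $B_q$ from how $q$ splits in $K=\QQ(\sqrt{-105})$. If $\left(\frac{-105}{q}\right)=1$ then $q$ splits, $\OK/\fq\cong\F_q$, and the order $n$ of $\theta$ divides $\lvert\F_q^{\times}\rvert=q-1$. If $\left(\frac{-105}{q}\right)=-1$ then $q$ is inert, $\OK/\fq\cong\F_{q^2}$; here the nontrivial automorphism of $K/\QQ$ reduces to the Frobenius $x\mapsto x^q$, so $N_{K/\QQ}(\theta)=\theta\bar\theta=1$ places $\theta$ in the kernel of the norm map $\F_{q^2}^{\times}\to\F_q^{\times}$, a cyclic group of order $q+1$; hence $n\mid q+1$. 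Either way $n\mid B_q$. The main obstacle is the middle step: the content of the lemma lies not in invoking Theorem~\ref{thm:non_defective} but in correctly translating the rational divisibility $q\mid\tu_n$ into the modular order statement, which requires ruling out every way in which $q$ could divide the auxiliary quantities $6$, $\alpha\beta$, and $\gamma-\bar\gamma$; it is precisely the primitivity of the divisor $q$ (via $q\nmid(\alpha^2-\beta^2)^2$) that closes these gaps and simultaneously pins down the exact order as $n$ rather than $1$.
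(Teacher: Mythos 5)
Your proposal is correct and takes essentially the same approach as the paper: invoke Theorem~\ref{thm:non_defective} to obtain a primitive divisor $q$ of $\tu_n=r'$, deduce $q\mid r$ and $q\nmid 210$ from the condition $q\nmid(\alpha^2-\beta^2)^2$, and convert $q\mid\tu_n$ into the statement that $\gamma/\bar{\gamma}$ has multiplicative order exactly $n$ modulo a prime $\fq$ above $q$, an order which divides $q-1$ in the split case and $q+1$ (via the norm-one subgroup of $\F_{q^2}^*$) in the inert case. The only minor difference is how the order is pinned down to be exactly $n$: you use the primality of $n$ together with $\fq\nmid\gamma-\bar{\gamma}$, whereas the paper uses the full primitivity condition $q\nmid\tu_1\cdots\tu_{n-1}$; both are sound.
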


\begin{proof}
Let $n>13$. By Theorem~\ref{thm:non_defective},
$\tilde{u}_n=(\alpha^n-\beta^n)/(\alpha-\beta)=r^\prime$
is divisible by a prime $q$ not dividing 
$(\alpha^2-\beta^2)^2=-280 u^2 v^2/3$ nor the terms
$\tilde{u}_1,\tilde{u}_2,\dotsc,\tilde{u}_{n-1}$.
We note that this is a prime $q$ dividing $r^\prime$
but not $210v$. Let $\fq$ be a prime of $K$
above $q$. As $(\alpha+\beta)^2$ and $\alpha\beta$ are coprime integers,
and as $\alpha$, $\beta$ satisfy equation~\eqref{eqn:r prime} we see that
$\fq \nmid \gamma$, $\overline{\gamma}$. We make two claims:
\begin{enumerate}
\item[(i)] the 
the multiplicative order
of the reduction of $\gamma/\overline{\gamma}$ modulo $\fq$
is $n$;
\item[(ii)] the multiplicative order
of the reduction of $\gamma/\overline{\gamma}$ modulo $\fq$
divides $B_q$.
\end{enumerate}
It follows immediately that $n \mid B_q$ which is what we
want to prove. 
We now need only prove (i), (ii). 
Let $m$ be a positive integer. 
Note that $\alpha/\overline{\alpha}=\gamma/\overline{\gamma}$.
Thus $q \mid \tilde{u}_m$
if and only if $(\gamma/\overline{\gamma})^m \equiv 1 \pmod{\fq}$.
Thus (i) follows as $q$ is a primitive divisor of $\tilde{u}_n$.
Let us prove (ii).
 If $-105$ is a square modulo $q$, then $\F_\fq=\F_q$
and so the multiplicative order divides $q-1=B_q$.
Otherwise, $\F_\fq=\F_{q^2}$. However, $\gamma/\overline{\gamma}$
has norm $1$, and the elements of norm $1$ in $\F_{q^2}^*$
form a subgroup of order $q+1=B_q$. In either case, the order
divides $B_q$. 
\end{proof}
\subsection{Proof of Theorem~\ref{thm:main} for $d=6$}
Let 
\begin{equation*}
B = \{3,5,7,11,13\} \cup \left\{\text{$p$ odd prime} \; : \; 
p \mid \left( q - \left(\frac{-c}{q}\right)\right) \, \text{for some odd prime $q \mid r$}
\right\}.
\end{equation*} 
Lemma~\ref{lem:B6} asserts  that for equation~\eqref{eqn:case d=6}, with $r \geq 1$, we have $n \in B$.
We wrote a simple \texttt{Magma} \cite{Magma} script which for
each $1 \le r \le \bound$ such that  $2,3 \nmid r$, and for each odd prime $v \mid r$, computed the set $B$.
For each odd prime $n \in B$
we know from \eqref{eqn:subtract conjugate} that $u$ is a root of \begin{equation*}
\frac{1}{2 \cdot r \cdot \sqrt{-105} \cdot 6^{(n-1)/2}}
\cdot \left( (u+v \sqrt{-105})^n-(u-v\sqrt{-105})^n\right) \; -1.
\end{equation*}
Computing these roots, we obtain the solutions $(x,y,n)$ as in Table~\ref{table:solutionsAP6}.

\section{Tables of Solutions}\label{sec:solntables}

\begin{center}
\begin{longtable}{|c|l|}
\caption{Triples of non-trivial primitive solutions $(x,y,n)$ 
\\
of equation~\eqref{eq:main} for $d=2$ and prime $n\geq3$ for  $1 \leq r\leq \bound$.}
  \label{table:solutionsAP2}
\\
\hline \multicolumn{1}{|c|}{\boldmath$r$} & \multicolumn{1}{c|}{\boldmath$(x,y,n)$}
\\ \hline 
\endfirsthead

\multicolumn{2}{c}
{{\bfseries \tablename\ \thetable{} -- continued from previous page}} \\
\hline \multicolumn{1}{|c|}{\boldmath$r$} &
\multicolumn{1}{c|}{\boldmath$(x,y,n)$}  \\ \hline 
\endhead

\hline \multicolumn{2}{|r|}{{Continued on next page}} \\ \hline
\endfoot

\hline \hline
\endlastfoot
$ 1 $ &
$ ( -1 , 1 , n ) $  ,
$ ( -2 , 1 , n ) $  
\\ \hline
$ 3 $ &
$ ( -41 , 5 , 5 ) $  ,
$ ( 38 , 5 , 5 ) $
\\ \hline
$ 5 $ &
$ ( 47 , 17 , 3 ) $  ,
$ ( -52 , 17 , 3 ) $
\\ \hline
$ 9 $ &
$ ( 2 , 5 , 3 ) $  ,
$ ( -11 , 5 , 3 ) $
\\ \hline
$ 13 $ &
$ ( -11 , 5 , 3 ) $  ,
$ ( -2 , 5 , 3 ) $
\\ \hline
$ 19 $ &
$ ( 2636 , 241 , 3 ) $  ,
$ ( -2655 , 241 , 3 ) $
\\ \hline
$ 27 $ &
$ ( 259 , 53 , 3 ) $  ,
$ ( -286 , 53 , 3 ) $
\\ \hline
$ 37 $ &
$ ( -46 , 13 , 3 ) $  ,
$ ( 9 , 13 , 3 ) $
\\ \hline
$ 55 $ &
$ ( -9 , 13 , 3 ) $  ,
$ ( -46 , 13 , 3 ) $
\\ \hline
$ 71 $ &
$ ( 137745 , 3361 , 3 ) $  ,
$ ( -137816 , 3361 , 3 ) $
\\ \hline
$ 73 $ &
$ ( -117 , 25 , 3 ) $  ,
$ ( 44 , 25 , 3 ) $
\\ \hline
$ 77 $ &
$ ( 65 , 29 , 3 ) $  ,
$ ( -142 , 29 , 3 ) $
\\ \hline
$ 79 $ &
$ ( -38 , 5 , 5 ) $  ,
$ ( -41 , 5 , 5 ) $
\\ \hline
$ 91 $ &
$ ( 107 , 37 , 3 ) $  ,
$ ( -198 , 37 , 3 ) $
\\ \hline
$ 99 $ &
$ ( -47 , 17 , 3 ) $  ,
$ ( -52 , 17 , 3 ) $  ,
$ ( 13754 , 725 , 3 ) $  ,
$ ( -13853 , 725 , 3 ) $
\\ \hline
$ 121 $ &
$ ( -236 , 41 , 3 ) $  ,
$ ( 115 , 41 , 3 ) $
\\ \hline
$ 143 $ &
$ ( 478 , 85 , 3 ) $  ,
$ ( -621 , 85 , 3 ) $  ,
$ ( 730 , 109 , 3 ) $  ,
$ ( -873 , 109 , 3 ) $
\\ \hline
$ 161 $ &
$ ( -44 , 25 , 3 ) $  ,
$ ( -117 , 25 , 3 ) $
\\ \hline
$ 181 $ &
$ ( -415 , 61 , 3 ) $  ,
$ ( 234 , 61 , 3 ) $
\\ \hline
$ 207 $ &
$ ( -142 , 29 , 3 ) $  ,
$ ( -65 , 29 , 3 ) $
\\ \hline
$ 249 $ &
$ ( 29 , 5 , 7 ) $  ,
$ ( -278 , 5 , 7 ) $
\\ \hline
$ 253 $ &
$ ( -666 , 85 , 3 ) $  ,
$ ( 413 , 85 , 3 ) $  ,
$ ( 296 , 73 , 3 ) $  
\\ \hline
 & 
$ ( -549 , 73 , 3 ) $  ,
$ ( 4482 , 349 , 3 ) $  ,
$ ( -4735 , 349 , 3 ) $
\\ \hline
$ 265 $ &
$ ( 7162792 , 46817 , 3 ) $  ,
$ ( -7163057 , 46817 , 3 ) $
\\ \hline
$ 297 $ &
$ ( 191 , 65 , 3 ) $  ,
$ ( -488 , 65 , 3 ) $
\\ \hline
$ 305 $ &
$ ( -107 , 37 , 3 ) $  ,
$ ( -198 , 37 , 3 ) $
\\ \hline
$ 307 $ &
$ ( -29 , 5 , 7 ) $  ,
$ ( -278 , 5 , 7 ) $
\\ \hline
$ 337 $ &
$ ( -1001 , 113 , 3 ) $  ,
$ ( 664 , 113 , 3 ) $
\\ \hline
$ 351 $ &
$ ( -115 , 41 , 3 ) $  ,
$ ( -236 , 41 , 3 ) $
\\ \hline
$ 369 $ &
$ ( 715957 , 10085 , 3 ) $  ,
$ ( -716326 , 10085 , 3 ) $
\\ \hline
$ 377 $ &
$ ( 9306 , 565 , 3 ) $  ,
$ ( -9683 , 565 , 3 ) $
\\ \hline
$ 391 $ &
$ ( 1573 , 185 , 3 ) $  ,
$ ( -1964 , 185 , 3 ) $
\\ \hline
$ 433 $ &
$ ( -1432 , 145 , 3 ) $  ,
$ ( 999 , 145 , 3 ) $
\\ \hline
$ 475 $ &
$ ( 5646 , 37 , 5 ) $  ,
$ ( -597 , 13 , 5 ) $  ,
$ ( 122 , 13 , 5 ) $  ,
$ ( -6121 , 37 , 5 ) $
\\ \hline
$ 481 $ &
$ ( 718 , 125 , 3 ) $  ,
$ ( -1199 , 125 , 3 ) $
\\ \hline
$ 517 $ &
$ ( 7 , 65 , 3 ) $  ,
$ ( -524 , 65 , 3 ) $  ,
$ ( 39553 , 1469 , 3 ) $  ,
$ ( -40070 , 1469 , 3 ) $
\\ \hline
$ 531 $ &
$ ( -524 , 65 , 3 ) $  ,
$ ( -7 , 65 , 3 ) $
\\ \hline
$ 541 $ &
$ ( -1971 , 181 , 3 ) $  ,
$ ( 1430 , 181 , 3 ) $
\\ \hline
$ 545 $ &
$ ( -286 , 53 , 3 ) $  ,
$ ( -259 , 53 , 3 ) $
\\ \hline
$ 559 $ &
$ ( 30483 , 1237 , 3 ) $  ,
$ ( -31042 , 1237 , 3 ) $
\\ \hline
$ 585 $ &
$ ( 803 , 137 , 3 ) $  ,
$ ( -1388 , 137 , 3 ) $
\\ \hline
$ 611 $ &
$ ( 297 , 97 , 3 ) $  ,
$ ( -908 , 97 , 3 ) $
\\ \hline
$ 629 $ &
$ ( 1737 , 205 , 3 ) $  ,
$ ( -2366 , 205 , 3 ) $
\\ \hline
$ 649 $ &
$ ( -234 , 61 , 3 ) $  ,
$ ( -415 , 61 , 3 ) $
\\ \hline
$ 661 $ &
$ ( -2630 , 221 , 3 ) $  ,
$ ( 1969 , 221 , 3 ) $
\\ \hline
$ 671 $ &
$ ( 299 , 101 , 3 ) $  ,
$ ( -970 , 101 , 3 ) $
\\ \hline
$ 679 $ &
$ ( -191 , 65 , 3 ) $  ,
$ ( -488 , 65 , 3 ) $
\\ \hline
$ 693 $ &
$ ( 3404 , 305 , 3 ) $  ,
$ ( -4097 , 305 , 3 ) $
\\ \hline
$ 717 $ &
$ ( 404 , 17 , 5 ) $  ,
$ ( -1121 , 17 , 5 ) $
\\ \hline
$ 719 $ &
$ ( -597 , 13 , 5 ) $  ,
$ ( -122 , 13 , 5 ) $
\\ \hline
$ 747 $ &
$ ( -835 , 89 , 3 ) $  ,
$ ( 88 , 89 , 3 ) $
\\ \hline
$ 793 $ &
$ ( -3421 , 265 , 3 ) $  ,
$ ( 2628 , 265 , 3 ) $
\\ \hline
$ 819 $ &
$ ( 2896 , 281 , 3 ) $  ,
$ ( -3715 , 281 , 3 ) $
\\ \hline
$ 845 $ &
$ ( -549 , 73 , 3 ) $  ,
$ ( -296 , 73 , 3 ) $
\\ \hline
$ 851 $ &
$ ( 18821 , 905 , 3 ) $  ,
$ ( -19672 , 905 , 3 ) $
\\ \hline
$ 923 $ &
$ ( -88 , 89 , 3 ) $  ,
$ ( -835 , 89 , 3 ) $
\\ \hline
$ 935 $ &
$ ( 235639 , 4813 , 3 ) $  ,
$ ( -236574 , 4813 , 3 ) $
\\ \hline
$ 937 $ &
$ ( -4356 , 313 , 3 ) $  ,
$ ( 3419 , 313 , 3 ) $
\\ \hline
$ 989 $ &
$ ( 372337471 , 652081 , 3 ) $  ,
$ ( -372338460 , 652081 , 3 ) $
\\ \hline
$ 1035 $ &
$ ( 1006 , 173 , 3 ) $  ,
$ ( -2041 , 173 , 3 ) $
\\ \hline
$ 1079 $ &
$ ( -413 , 85 , 3 ) $  ,
$ ( -666 , 85 , 3 ) $
\\ \hline
$ 1093 $ &
$ ( -5447 , 365 , 3 ) $  ,
$ ( 4354 , 365 , 3 ) $
\\ \hline
$ 1099 $ &
$ ( -478 , 85 , 3 ) $  ,
$ ( -621 , 85 , 3 ) $
\\ \hline
$ 1121 $ &
$ ( 1694 , 221 , 3 ) $  ,
$ ( -2815 , 221 , 3 ) $
\\ \hline
$ 1205 $ &
$ ( -908 , 97 , 3 ) $  ,
$ ( -297 , 97 , 3 ) $
\\ \hline
$ 1207 $ &
$ ( 828 , 169 , 3 ) $  ,
$ ( -2035 , 169 , 3 ) $
\\ \hline
$ 1261 $ &
$ ( -6706 , 421 , 3 ) $  ,
$ ( 5445 , 421 , 3 ) $  ,
$ ( 431 , 145 , 3 ) $  ,
$ ( -1692 , 145 , 3 ) $
\\ \hline
$ 1269 $ &
$ ( -299 , 101 , 3 ) $  ,
$ ( -970 , 101 , 3 ) $
\\ \hline
$ 1287 $ &
$ ( -1757 , 149 , 3 ) $  ,
$ ( 470 , 149 , 3 ) $  ,
$ ( 22552 , 1025 , 3 ) $  ,
$ ( -23839 , 1025 , 3 ) $
\\ \hline
$ 1377 $ &
$ ( 37219754 , 140453 , 3 ) $  ,
$ ( -37221131 , 140453 , 3 ) $
\\ \hline
$ 1387 $ &
$ ( 2277 , 265 , 3 ) $  ,
$ ( -3664 , 265 , 3 ) $
\\ \hline
$ 1403 $ &
$ ( 86256 , 2473 , 3 ) $  ,
$ ( -87659 , 2473 , 3 ) $
\\ \hline
$ 1417 $ &
$ ( 498157 , 7925 , 3 ) $  ,
$ ( -499574 , 7925 , 3 ) $
\\ \hline
$ 1441 $ &
$ ( -8145 , 481 , 3 ) $  ,
$ ( 6704 , 481 , 3 ) $
\\ \hline
$ 1457 $ &
$ ( 10296 , 625 , 3 ) $  ,
$ ( -11753 , 625 , 3 ) $
\\ \hline
$ 1475 $ &
$ ( 4807 , 397 , 3 ) $  ,
$ ( -6282 , 397 , 3 ) $
\\ \hline
$ 1525 $ &
$ ( -1121 , 17 , 5 ) $  ,
$ ( -404 , 17 , 5 ) $
\\ \hline
$ 1603 $ &
$ ( -873 , 109 , 3 ) $  ,
$ ( -730 , 109 , 3 ) $
\\ \hline
$ 1611 $ &
$ ( -2392 , 185 , 3 ) $  ,
$ ( 781 , 185 , 3 ) $
\\ \hline
$ 1633 $ &
$ ( -9776 , 545 , 3 ) $  ,
$ ( 8143 , 545 , 3 ) $
\\ \hline
$ 1665 $ &
$ ( -664 , 113 , 3 ) $  ,
$ ( -1001 , 113 , 3 ) $
\\ \hline
$ 1679 $ &
$ ( 4268 , 377 , 3 ) $  ,
$ ( -5947 , 377 , 3 ) $
\\ \hline
$ 1819 $ &
$ ( 143 , 157 , 3 ) $  ,
$ ( -1962 , 157 , 3 ) $
\\ \hline
$ 1837 $ &
$ ( -11611 , 613 , 3 ) $  ,
$ ( 9774 , 613 , 3 ) $
\\ \hline
$ 1853 $ &
$ ( 1342 , 229 , 3 ) $  ,
$ ( -3195 , 229 , 3 ) $
\\ \hline
$ 1863 $ &
$ ( 7135 , 509 , 3 ) $  ,
$ ( -8998 , 509 , 3 ) $
\\ \hline
$ 1891 $ &
$ ( 11034 , 661 , 3 ) $  ,
$ ( -12925 , 661 , 3 ) $
\\ \hline
$ 1909 $ &
$ ( 3077 , 325 , 3 ) $  ,
$ ( -4986 , 325 , 3 ) $
\\ \hline
$ 1917 $ &
$ ( -718 , 125 , 3 ) $  ,
$ ( -1199 , 125 , 3 ) $
\\ \hline
$ 1925 $ &
$ ( 2061306 , 20413 , 3 ) $  ,
$ ( -2063231 , 20413 , 3 ) $
\\ \hline
$ 1927 $ &
$ ( 24992 , 1105 , 3 ) $  ,
$ ( -26919 , 1105 , 3 ) $
\\ \hline
$ 1961 $ &
$ ( 56881 , 1885 , 3 ) $  ,
$ ( -58842 , 1885 , 3 ) $
\\ \hline
$ 1989 $ &
$ ( 49480 , 1721 , 3 ) $  ,
$ ( -51469 , 1721 , 3 ) $
\\ \hline
$ 2033 $ &
$ ( 793 , 205 , 3 ) $  ,
$ ( -2826 , 205 , 3 ) $
\\ \hline
$ 2053 $ &
$ ( -13662 , 685 , 3 ) $  ,
$ ( 11609 , 685 , 3 ) $
\\ \hline
$ 2093 $ &
$ ( 1605854 , 17285 , 3 ) $  ,
$ ( -1607947 , 17285 , 3 ) $
\\ \hline
$ 2105 $ &
$ ( 4449 , 13 , 7 ) $  ,
$ ( -6554 , 13 , 7 ) $  ,
$ ( -1962 , 157 , 3 ) $  ,
$ ( -143 , 157 , 3 ) $
\\ \hline
$ 2115 $ &
$ ( 587 , 197 , 3 ) $  ,
$ ( -2702 , 197 , 3 ) $
\\ \hline
$ 2123 $ &
$ ( -431 , 145 , 3 ) $  ,
$ ( -1692 , 145 , 3 ) $
\\ \hline
$ 2191 $ &
$ ( -1388 , 137 , 3 ) $  ,
$ ( -803 , 137 , 3 ) $
\\ \hline
$ 2227 $ &
$ ( -470 , 149 , 3 ) $  ,
$ ( -1757 , 149 , 3 ) $
\\ \hline
$ 2281 $ &
$ ( -15941 , 761 , 3 ) $  ,
$ ( 13660 , 761 , 3 ) $
\\ \hline
$ 2367 $ &
$ ( -4070 , 269 , 3 ) $  ,
$ ( 1703 , 269 , 3 ) $
\\ \hline
$ 2407 $ &
$ ( 7414 , 533 , 3 ) $  ,
$ ( -9821 , 533 , 3 ) $
\\ \hline
$ 2431 $ &
$ ( -999 , 145 , 3 ) $  ,
$ ( -1432 , 145 , 3 ) $
\\ \hline
$ 2479 $ &
$ ( 115234 , 3005 , 3 ) $  ,
$ ( -117713 , 3005 , 3 ) $
\\ \hline
$ 2485 $ &
$ ( 14499 , 793 , 3 ) $  ,
$ ( -16984 , 793 , 3 ) $
\\ \hline
$ 2521 $ &
$ ( -18460 , 841 , 3 ) $  ,
$ ( 15939 , 841 , 3 ) $
\\ \hline
$ 2645 $ &
$ ( -2681 , 193 , 3 ) $  ,
$ ( 36 , 193 , 3 ) $
\\ \hline
$ 2673 $ &
$ ( 203983 , 4385 , 3 ) $  ,
$ ( -206656 , 4385 , 3 ) $
\\ \hline
$ 2717 $ &
$ ( -36 , 193 , 3 ) $  ,
$ ( -2681 , 193 , 3 ) $
\\ \hline
$ 2773 $ &
$ ( -21231 , 925 , 3 ) $  ,
$ ( 18458 , 925 , 3 ) $
\\ \hline
$ 2799 $ &
$ ( -5137 , 317 , 3 ) $  ,
$ ( 2338 , 317 , 3 ) $
\\ \hline
$ 2807 $ &
$ ( -4282 , 29 , 5 ) $  ,
$ ( 1475 , 29 , 5 ) $
\\ \hline
$ 2863 $ &
$ ( -2035 , 169 , 3 ) $  ,
$ ( -828 , 169 , 3 ) $
\\ \hline
$ 2879 $ &
$ ( -3116 , 25 , 5 ) $  ,
$ ( 237 , 25 , 5 ) $
\\ \hline
$ 2925 $ &
$ ( 160067 , 3737 , 3 ) $  ,
$ ( -162992 , 3737 , 3 ) $
\\ \hline
$ 2983 $ &
$ ( 1726 , 293 , 3 ) $  ,
$ ( -4709 , 293 , 3 ) $
\\ \hline
$ 2989 $ &
$ ( 44630 , 1621 , 3 ) $  ,
$ ( -47619 , 1621 , 3 ) $
\\ \hline
$ 2997 $ &
$ ( 5060 , 449 , 3 ) $  ,
$ ( -8057 , 449 , 3 ) $
\\ \hline
$ 3025 $ &
$ ( 5228 , 457 , 3 ) $  ,
$ ( -8253 , 457 , 3 ) $
\\ \hline
$ 3037 $ &
$ ( -24266 , 1013 , 3 ) $  ,
$ ( 21229 , 1013 , 3 ) $
\\ \hline
$ 3047 $ &
$ ( -1006 , 173 , 3 ) $  ,
$ ( -2041 , 173 , 3 ) $
\\ \hline
$ 3151 $ &
$ ( 987228 , 12505 , 3 ) $  ,
$ ( -990379 , 12505 , 3 ) $
\\ \hline
$ 3173 $ &
$ ( -781 , 185 , 3 ) $  ,
$ ( -2392 , 185 , 3 ) $
\\ \hline
$ 3245 $ &
$ ( -3544 , 233 , 3 ) $  ,
$ ( 299 , 233 , 3 ) $
\\ \hline
$ 3275 $ &
$ ( 3186 , 373 , 3 ) $  ,
$ ( -6461 , 373 , 3 ) $
\\ \hline
$ 3281 $ &
$ ( 767 , 257 , 3 ) $  ,
$ ( -4048 , 257 , 3 ) $
\\ \hline
$ 3289 $ &
$ ( -587 , 197 , 3 ) $  ,
$ ( -2702 , 197 , 3 ) $  ,
$ ( 1744 , 305 , 3 ) $  ,
$ ( -5033 , 305 , 3 ) $
\\ \hline
$ 3313 $ &
$ ( -27577 , 1105 , 3 ) $  ,
$ ( 24264 , 1105 , 3 ) $
\\ \hline
$ 3353 $ &
$ ( -3116 , 25 , 5 ) $  ,
$ ( -237 , 25 , 5 ) $
\\ \hline
$ 3401 $ &
$ ( -1430 , 181 , 3 ) $  ,
$ ( -1971 , 181 , 3 ) $
\\ \hline
$ 3487 $ &
$ ( 12258250 , 66989 , 3 ) $  ,
$ ( -12261737 , 66989 , 3 ) $
\\ \hline
$ 3509 $ &
$ ( 8515 , 601 , 3 ) $  ,
$ ( -12024 , 601 , 3 ) $
\\ \hline
$ 3537 $ &
$ ( -1964 , 185 , 3 ) $  ,
$ ( -1573 , 185 , 3 ) $
\\ \hline
$ 3601 $ &
$ ( -31176 , 1201 , 3 ) $  ,
$ ( 27575 , 1201 , 3 ) $
\\ \hline
$ 3619 $ &
$ ( -2826 , 205 , 3 ) $  ,
$ ( -793 , 205 , 3 ) $
\\ \hline
$ 3663 $ &
$ ( 10825 , 689 , 3 ) $  ,
$ ( -14488 , 689 , 3 ) $
\\ \hline
$ 3691 $ &
$ ( 19354423140 , 9082321 , 3 ) $  ,
$ ( -19354426831 , 9082321 , 3 ) $
\\ \hline
$ 3771 $ &
$ ( -7787 , 425 , 3 ) $  ,
$ ( 4016 , 425 , 3 ) $
\\ \hline
$ 3827 $ &
$ ( 2642 , 5 , 11 ) $  ,
$ ( -6469 , 5 , 11 ) $
\\ \hline
$ 3835 $ &
$ ( 102663 , 2797 , 3 ) $  ,
$ ( -106498 , 2797 , 3 ) $
\\ \hline
$ 3843 $ &
$ ( -299 , 233 , 3 ) $  ,
$ ( -3544 , 233 , 3 ) $
\\ \hline
$ 3887 $ &
$ ( 3573 , 409 , 3 ) $  ,
$ ( -7460 , 409 , 3 ) $
\\ \hline
$ 3901 $ &
$ ( -35075 , 1301 , 3 ) $  ,
$ ( 31174 , 1301 , 3 ) $
\\ \hline
$ 3905 $ &
$ ( -4563 , 277 , 3 ) $  ,
$ ( 658 , 277 , 3 ) $  ,
$ ( 60931 , 1993 , 3 ) $  ,
$ ( -64836 , 1993 , 3 ) $
\\ \hline
$ 3977 $ &
$ ( 25624 , 1153 , 3 ) $  ,
$ ( -29601 , 1153 , 3 ) $
\\ \hline
$ 4033 $ &
$ ( 18557 , 949 , 3 ) $  ,
$ ( -22590 , 949 , 3 ) $
\\ \hline
$ 4103 $ &
$ ( -1737 , 205 , 3 ) $  ,
$ ( -2366 , 205 , 3 ) $
\\ \hline
$ 4213 $ &
$ ( -39286 , 1405 , 3 ) $  ,
$ ( 35073 , 1405 , 3 ) $
\\ \hline
$ 4311 $ &
$ ( -9394 , 485 , 3 ) $  ,
$ ( 5083 , 485 , 3 ) $
\\ \hline
$ 4347 $ &
$ ( 95303 , 2669 , 3 ) $  ,
$ ( -99650 , 2669 , 3 ) $
\\ \hline
$ 4393 $ &
$ ( 495 , 289 , 3 ) $  ,
$ ( -4888 , 289 , 3 ) $
\\ \hline
$ 4433 $ &
$ ( 7751 , 593 , 3 ) $  ,
$ ( -12184 , 593 , 3 ) $
\\ \hline
$ 4473 $ &
$ ( 2158 , 365 , 3 ) $  ,
$ ( -6631 , 365 , 3 ) $
\\ \hline
$ 4509 $ &
$ ( -2815 , 221 , 3 ) $  ,
$ ( -1694 , 221 , 3 ) $
\\ \hline
$ 4537 $ &
$ ( -43821 , 1513 , 3 ) $  ,
$ ( 39284 , 1513 , 3 ) $  ,
$ ( -1342 , 229 , 3 ) $  ,
$ ( -3195 , 229 , 3 ) $
\\ \hline
$ 4599 $ &
$ ( -1969 , 221 , 3 ) $  ,
$ ( -2630 , 221 , 3 ) $
\\ \hline
$ 4775 $ &
$ ( 1649 , 353 , 3 ) $  ,
$ ( -6424 , 353 , 3 ) $
\\ \hline
$ 4779 $ &
$ ( 6284 , 545 , 3 ) $  ,
$ ( -11063 , 545 , 3 ) $
\\ \hline
$ 4807 $ &
$ ( 971 , 325 , 3 ) $  ,
$ ( -5778 , 325 , 3 ) $
\\ \hline
$ 4815 $ &
$ ( -767 , 257 , 3 ) $  ,
$ ( -4048 , 257 , 3 ) $
\\ \hline
$ 4843 $ &
$ ( 5229 , 505 , 3 ) $  ,
$ ( -10072 , 505 , 3 ) $
\\ \hline
$ 4851 $ &
$ ( 1221617 , 14417 , 3 ) $  ,
$ ( -1226468 , 14417 , 3 ) $
\\ \hline
$ 4873 $ &
$ ( -48692 , 1625 , 3 ) $  ,
$ ( 43819 , 1625 , 3 ) $
\\ \hline
$ 4941 $ &
$ ( 3211 , 425 , 3 ) $  ,
$ ( -8152 , 425 , 3 ) $
\\ \hline
$ 5139 $ &
$ ( 1934725219 , 1956245 , 3 ) $  ,
$ ( -1934730358 , 1956245 , 3 ) $
\\ \hline
$ 5221 $ &
$ ( -53911 , 1741 , 3 ) $  ,
$ ( 48690 , 1741 , 3 ) $  ,
$ ( -658 , 277 , 3 ) $  
\\ \hline
 & 
$ ( -4563 , 277 , 3 ) $  ,
$ ( 4498091 , 34345 , 3 ) $  ,
$ ( -4503312 , 34345 , 3 ) $
\\ \hline
$ 5243 $ &
$ ( 41078 , 1565 , 3 ) $  ,
$ ( -46321 , 1565 , 3 ) $
\\ \hline
$ 5251 $ &
$ ( 267190 , 5261 , 3 ) $  ,
$ ( -272441 , 5261 , 3 ) $
\\ \hline
$ 5291 $ &
$ ( -2636 , 241 , 3 ) $  ,
$ ( -2655 , 241 , 3 ) $  ,
$ ( 15148 , 865 , 3 ) $  
\\ \hline
 & 
$ ( -20439 , 865 , 3 ) $  ,
$ ( 25948522 , 110437 , 3 ) $  ,
$ ( -25953813 , 110437 , 3 ) $
\\ \hline
$ 5311 $ &
$ ( 551177 , 8497 , 3 ) $  ,
$ ( -556488 , 8497 , 3 ) $
\\ \hline
$ 5383 $ &
$ ( -4888 , 289 , 3 ) $  ,
$ ( -495 , 289 , 3 ) $
\\ \hline
$ 5405 $ &
$ ( -7117 , 377 , 3 ) $  ,
$ ( 1712 , 377 , 3 ) $
\\ \hline
$ 5499 $ &
$ ( -13232 , 617 , 3 ) $  ,
$ ( 7733 , 617 , 3 ) $
\\ \hline
$ 5581 $ &
$ ( -59490 , 1861 , 3 ) $  ,
$ ( 53909 , 1861 , 3 ) $
\\ \hline
$ 5611 $ &
$ ( 10015 , 701 , 3 ) $  ,
$ ( -15626 , 701 , 3 ) $
\\ \hline
$ 5621 $ &
$ ( 174512 , 3977 , 3 ) $  ,
$ ( -180133 , 3977 , 3 ) $
\\ \hline
$ 5633 $ &
$ ( 26037 , 1189 , 3 ) $  ,
$ ( -31670 , 1189 , 3 ) $
\\ \hline
$ 5723 $ &
$ ( 77796 , 2353 , 3 ) $  ,
$ ( -83519 , 2353 , 3 ) $
\\ \hline
$ 5725 $ &
$ ( 4329 , 493 , 3 ) $  ,
$ ( -10054 , 493 , 3 ) $
\\ \hline
$ 5757 $ &
$ ( -1475 , 29 , 5 ) $  ,
$ ( -4282 , 29 , 5 ) $
\\ \hline
$ 5773 $ &
$ ( -1703 , 269 , 3 ) $  ,
$ ( -4070 , 269 , 3 ) $
\\ \hline
$ 5941 $ &
$ ( -2277 , 265 , 3 ) $  ,
$ ( -3664 , 265 , 3 ) $
\\ \hline
$ 5953 $ &
$ ( -65441 , 1985 , 3 ) $  ,
$ ( 59488 , 1985 , 3 ) $
\\ \hline
$ 5975 $ &
$ ( 208 , 337 , 3 ) $  ,
$ ( -6183 , 337 , 3 ) $
\\ \hline
$ 6049 $ &
$ ( -2628 , 265 , 3 ) $  ,
$ ( -3421 , 265 , 3 ) $
\\ \hline
$ 6147 $ &
$ ( -15487 , 689 , 3 ) $  ,
$ ( 9340 , 689 , 3 ) $
\\ \hline
$ 6245 $ &
$ ( -8676 , 433 , 3 ) $  ,
$ ( 2431 , 433 , 3 ) $
\\ \hline
$ 6265 $ &
$ ( 11258 , 757 , 3 ) $  ,
$ ( -17523 , 757 , 3 ) $
\\ \hline
$ 6313 $ &
$ ( 74014 , 2285 , 3 ) $  ,
$ ( -80327 , 2285 , 3 ) $
\\ \hline
$ 6335 $ &
$ ( 14922 , 877 , 3 ) $  ,
$ ( -21257 , 877 , 3 ) $
\\ \hline
$ 6337 $ &
$ ( -71776 , 2113 , 3 ) $  ,
$ ( 65439 , 2113 , 3 ) $
\\ \hline
$ 6371 $ &
$ ( 2638 , 445 , 3 ) $  ,
$ ( -9009 , 445 , 3 ) $
\\ \hline
$ 6391 $ &
$ ( -6183 , 337 , 3 ) $  ,
$ ( -208 , 337 , 3 ) $
\\ \hline
$ 6435 $ &
$ ( -1726 , 293 , 3 ) $  ,
$ ( -4709 , 293 , 3 ) $
\\ \hline
$ 6557 $ &
$ ( 184574 , 4133 , 3 ) $  ,
$ ( -191131 , 4133 , 3 ) $
\\ \hline
$ 6611 $ &
$ ( -3715 , 281 , 3 ) $  ,
$ ( -2896 , 281 , 3 ) $
\\ \hline
$ 6643 $ &
$ ( 288629 , 5545 , 3 ) $  ,
$ ( -295272 , 5545 , 3 ) $
\\ \hline
$ 6733 $ &
$ ( -78507 , 2245 , 3 ) $  ,
$ ( 71774 , 2245 , 3 ) $
\\ \hline
$ 6741 $ &
$ ( 1199 , 401 , 3 ) $  ,
$ ( -7940 , 401 , 3 ) $
\\ \hline
$ 6749 $ &
$ ( -971 , 325 , 3 ) $  ,
$ ( -5778 , 325 , 3 ) $
\\ \hline
$ 6777 $ &
$ ( -5033 , 305 , 3 ) $  ,
$ ( -1744 , 305 , 3 ) $
\\ \hline
$ 6903 $ &
$ ( 21161 , 1073 , 3 ) $  ,
$ ( -28064 , 1073 , 3 ) $
\\ \hline
$ 6931 $ &
$ ( 3140 , 481 , 3 ) $  ,
$ ( -10071 , 481 , 3 ) $
\\ \hline
$ 6989 $ &
$ ( 7436 , 641 , 3 ) $  ,
$ ( -14425 , 641 , 3 ) $
\\ \hline
$ 7037 $ &
$ ( 5302 , 565 , 3 ) $  ,
$ ( -12339 , 565 , 3 ) $
\\ \hline
$ 7097 $ &
$ ( 1319591 , 15185 , 3 ) $  ,
$ ( -1326688 , 15185 , 3 ) $
\\ \hline
$ 7141 $ &
$ ( -85646 , 2381 , 3 ) $  ,
$ ( 78505 , 2381 , 3 ) $
\\ \hline
$ 7145 $ &
$ ( -10439 , 493 , 3 ) $  ,
$ ( 3294 , 493 , 3 ) $
\\ \hline
$ 7183 $ &
$ ( 107167895 , 284269 , 3 ) $  ,
$ ( -107175078 , 284269 , 3 ) $
\\ \hline
$ 7191 $ &
$ ( 49430 , 1781 , 3 ) $  ,
$ ( -56621 , 1781 , 3 ) $
\\ \hline
$ 7245 $ &
$ ( 444026 , 7373 , 3 ) $  ,
$ ( -451271 , 7373 , 3 ) $
\\ \hline
$ 7259 $ &
$ ( 647075 , 9461 , 3 ) $  ,
$ ( -654334 , 9461 , 3 ) $
\\ \hline
$ 7267 $ &
$ ( 28888 , 1289 , 3 ) $  ,
$ ( -36155 , 1289 , 3 ) $
\\ \hline
$ 7339 $ &
$ ( 20491 , 1061 , 3 ) $  ,
$ ( -27830 , 1061 , 3 ) $
\\ \hline
$ 7363 $ &
$ ( 3031686 , 26413 , 3 ) $  ,
$ ( -3039049 , 26413 , 3 ) $
\\ \hline
$ 7371 $ &
$ ( 2592749 , 23801 , 3 ) $  ,
$ ( -2600120 , 23801 , 3 ) $
\\ \hline
$ 7379 $ &
$ ( 38259 , 1525 , 3 ) $  ,
$ ( -45638 , 1525 , 3 ) $
\\ \hline
$ 7475 $ &
$ ( -2338 , 317 , 3 ) $  ,
$ ( -5137 , 317 , 3 ) $
\\ \hline
$ 7483 $ &
$ ( -7670 , 389 , 3 ) $  ,
$ ( 187 , 389 , 3 ) $
\\ \hline
$ 7501 $ &
$ ( -3404 , 305 , 3 ) $  ,
$ ( -4097 , 305 , 3 ) $
\\ \hline
$ 7551 $ &
$ ( -20729 , 845 , 3 ) $  ,
$ ( 13178 , 845 , 3 ) $
\\ \hline
$ 7561 $ &
$ ( -93205 , 2521 , 3 ) $  ,
$ ( 85644 , 2521 , 3 ) $
\\ \hline
$ 7579 $ &
$ ( 51209 , 1825 , 3 ) $  ,
$ ( -58788 , 1825 , 3 ) $
\\ \hline
$ 7775 $ &
$ ( -3419 , 313 , 3 ) $  ,
$ ( -4356 , 313 , 3 ) $
\\ \hline
$ 7813 $ &
$ ( 83553173 , 240805 , 3 ) $  ,
$ ( -83560986 , 240805 , 3 ) $
\\ \hline
$ 7847 $ &
$ ( 19841 , 65 , 5 ) $  ,
$ ( -27688 , 65 , 5 ) $
\\ \hline
$ 7849 $ &
$ ( 33588 , 1417 , 3 ) $  ,
$ ( -41437 , 1417 , 3 ) $
\\ \hline
$ 7857 $ &
$ ( -187 , 389 , 3 ) $  ,
$ ( -7670 , 389 , 3 ) $
\\ \hline
$ 7957 $ &
$ ( 148005 , 3589 , 3 ) $  ,
$ ( -155962 , 3589 , 3 ) $
\\ \hline
$ 7993 $ &
$ ( -101196 , 2665 , 3 ) $  ,
$ ( 93203 , 2665 , 3 ) $
\\ \hline
$ 7999 $ &
$ ( -10475 , 41 , 5 ) $  ,
$ ( 2476 , 41 , 5 ) $
\\ \hline
$ 8063 $ &
$ ( -4986 , 325 , 3 ) $  ,
$ ( -3077 , 325 , 3 ) $
\\ \hline
$ 8073 $ &
$ ( -6424 , 353 , 3 ) $  ,
$ ( -1649 , 353 , 3 ) $
\\ \hline
$ 8105 $ &
$ ( -12418 , 557 , 3 ) $  ,
$ ( 4313 , 557 , 3 ) $
\\ \hline
$ 8217 $ &
$ ( 11635 , 809 , 3 ) $  ,
$ ( -19852 , 809 , 3 ) $
\\ \hline
$ 8307 $ &
$ ( -23740 , 929 , 3 ) $  ,
$ ( 15433 , 929 , 3 ) $
\\ \hline
$ 8437 $ &
$ ( -109631 , 2813 , 3 ) $  ,
$ ( 101194 , 2813 , 3 ) $
\\ \hline
$ 8541 $ &
$ ( 15689 , 941 , 3 ) $  ,
$ ( -24230 , 941 , 3 ) $
\\ \hline
$ 8549 $ &
$ ( 413829 , 7045 , 3 ) $  ,
$ ( -422378 , 7045 , 3 ) $
\\ \hline
$ 8659 $ &
$ ( -9361 , 445 , 3 ) $  ,
$ ( 702 , 445 , 3 ) $
\\ \hline
$ 8671 $ &
$ ( 1159 , 461 , 3 ) $  ,
$ ( -9830 , 461 , 3 ) $
\\ \hline
$ 8725 $ &
$ ( 3166 , 533 , 3 ) $  ,
$ ( -11891 , 533 , 3 ) $
\\ \hline
$ 8789 $ &
$ ( -2158 , 365 , 3 ) $  ,
$ ( -6631 , 365 , 3 ) $
\\ \hline
$ 8829 $ &
$ ( -1712 , 377 , 3 ) $  ,
$ ( -7117 , 377 , 3 ) $
\\ \hline
$ 8893 $ &
$ ( -118522 , 2965 , 3 ) $  ,
$ ( 109629 , 2965 , 3 ) $
\\ \hline
$ 9017 $ &
$ ( 781300 , 10729 , 3 ) $  ,
$ ( -790317 , 10729 , 3 ) $
\\ \hline
$ 9111 $ &
$ ( -6469 , 5 , 11 ) $  ,
$ ( -2642 , 5 , 11 ) $
\\ \hline
$ 9131 $ &
$ ( 1451 , 485 , 3 ) $  ,
$ ( -10582 , 485 , 3 ) $
\\ \hline
$ 9139 $ &
$ ( -1199 , 401 , 3 ) $  ,
$ ( -7940 , 401 , 3 ) $
\\ \hline
$ 9217 $ &
$ ( -4735 , 349 , 3 ) $  ,
$ ( -4482 , 349 , 3 ) $
\\ \hline
$ 9269 $ &
$ ( 2961 , 541 , 3 ) $  ,
$ ( -12230 , 541 , 3 ) $
\\ \hline
$ 9287 $ &
$ ( 11583 , 829 , 3 ) $  ,
$ ( -20870 , 829 , 3 ) $  ,
$ ( 6084559 , 42013 , 3 ) $  ,
$ ( -6093846 , 42013 , 3 ) $
\\ \hline
$ 9361 $ &
$ ( -127881 , 3121 , 3 ) $  ,
$ ( 118520 , 3121 , 3 ) $  ,
$ ( 17082 , 997 , 3 ) $  ,
$ ( -26443 , 997 , 3 ) $
\\ \hline
$ 9603 $ &
$ ( 5267 , 629 , 3 ) $  ,
$ ( -14870 , 629 , 3 ) $
\\ \hline
$ 9647 $ &
$ ( -6461 , 373 , 3 ) $  ,
$ ( -3186 , 373 , 3 ) $
\\ \hline
$ 9703 $ &
$ ( 8684 , 745 , 3 ) $  ,
$ ( -18387 , 745 , 3 ) $
\\ \hline
$ 9729 $ &
$ ( 124306 , 3221 , 3 ) $  ,
$ ( -134035 , 3221 , 3 ) $
\\ \hline
$ 9801 $ &
$ ( -4354 , 365 , 3 ) $  ,
$ ( -5447 , 365 , 3 ) $
\\ \hline
$ 9841 $ &
$ ( -137720 , 3281 , 3 ) $  ,
$ ( 127879 , 3281 , 3 ) $
\\ \hline
$ 9855 $ &
$ ( 26962 , 1277 , 3 ) $  ,
$ ( -36817 , 1277 , 3 ) $
\\ \hline
$ 9919 $ &
$ ( -11268 , 505 , 3 ) $  ,
$ ( 1349 , 505 , 3 ) $
\\ \hline
$ 9927 $ &
$ ( -30602 , 1109 , 3 ) $  ,
$ ( 20675 , 1109 , 3 ) $
\\ \hline
$ 9999 $ &
$ ( 10705088 , 61217 , 3 ) $  ,
$ ( -10715087 , 61217 , 3 ) $
\\ \hline
\end{longtable}
\end{center}

\begin{center}
\begin{longtable}{|c|l|}
\caption{Triples of non-trivial primitive solutions $(x,|y|,n)$ 
\\
of equation~\eqref{eq:main} for $d=2, n=4$ and for $1 \leq r\leq \bound$.}
  \label{table:solutionsAP24}
 \\
\hline \multicolumn{1}{|c|}{\boldmath$r$} & \multicolumn{1}{c|}{\boldmath$(x,|y|,n)$}
\\ \hline 
\endfirsthead

\multicolumn{2}{c}
{{\bfseries \tablename\ \thetable{} -- continued from previous page}} \\
\hline \multicolumn{1}{|c|}{\boldmath$r$} &
\multicolumn{1}{c|}{\boldmath$(x,|y| ,n)$}  \\ \hline 
\endhead

\hline \multicolumn{2}{|r|}{{Continued on next page}} \\ \hline
\endfoot

\hline \hline
\endlastfoot
$ 1 $ &
$ ( 118 , 13 , 4 ) $  ,
$ ( -121 , 13 , 4 ) $  ,
$ ( -1 , 1 , 4 ) $  ,
$ ( -2 , 1 , 4 ) $
\\ \hline
$ 17 $ &
$ ( -10 , 5 , 4 ) $  ,
$ ( -41 , 5 , 4 ) $
\\ \hline
$ 31 $ &
$ ( -55 , 5 , 4 ) $  ,
$ ( -38 , 5 , 4 ) $
\\ \hline
$ 79 $ &
$ ( -319 , 17 , 4 ) $  ,
$ ( 82 , 17 , 4 ) $
\\ \hline
$ 191 $ &
$ ( -718 , 25 , 4 ) $  ,
$ ( 145 , 25 , 4 ) $
\\ \hline
$ 239 $ &
$ ( -359 , 13 , 4 ) $  ,
$ ( -358 , 13 , 4 ) $
\\ \hline
$ 241 $ &
$ ( 599 , 37 , 4 ) $  ,
$ ( -1322 , 37 , 4 ) $
\\ \hline
$ 401 $ &
$ ( -562 , 17 , 4 ) $  ,
$ ( -641 , 17 , 4 ) $
\\ \hline
$ 799 $ &
$ ( -79 , 41 , 4 ) $  ,
$ ( -758 , 29 , 4 ) $  ,
$ ( -1639 , 29 , 4 ) $  ,
$ ( -2318 , 41 , 4 ) $
\\ \hline
$ 863 $ &
$ ( -1199 , 25 , 4 ) $  ,
$ ( -1390 , 25 , 4 ) $
\\ \hline
$ 881 $ &
$ ( -1721 , 29 , 4 ) $  ,
$ ( -922 , 29 , 4 ) $
\\ \hline
$ 911 $ &
$ ( -6455 , 85 , 4 ) $  ,
$ ( 3722 , 85 , 4 ) $
\\ \hline
$ 1279 $ &
$ ( -38 , 53 , 4 ) $  ,
$ ( -3799 , 53 , 4 ) $
\\ \hline
$ 1361 $ &
$ ( -7601 , 89 , 4 ) $  ,
$ ( 3518 , 89 , 4 ) $
\\ \hline
$ 1457 $ &
$ ( 8839 , 125 , 4 ) $  ,
$ ( -13210 , 125 , 4 ) $
\\ \hline
$ 1649 $ &
$ ( 398 , 65 , 4 ) $  ,
$ ( -5345 , 65 , 4 ) $
\\ \hline
$ 1697 $ &
$ ( 319 , 65 , 4 ) $  ,
$ ( -5410 , 65 , 4 ) $
\\ \hline
$ 1921 $ &
$ ( -2761 , 37 , 4 ) $  ,
$ ( -3002 , 37 , 4 ) $
\\ \hline
$ 2159 $ &
$ ( -839 , 61 , 4 ) $  ,
$ ( -5638 , 61 , 4 ) $
\\ \hline
$ 2239 $ &
$ ( -2959 , 41 , 4 ) $  ,
$ ( -3758 , 41 , 4 ) $
\\ \hline
$ 2719 $ &
$ ( -19718 , 149 , 4 ) $  ,
$ ( 11561 , 149 , 4 ) $
\\ \hline
$ 3503 $ &
$ ( 24410 , 205 , 4 ) $  ,
$ ( -34919 , 205 , 4 ) $
\\ \hline
$ 3761 $ &
$ ( -5002 , 53 , 4 ) $  ,
$ ( -6281 , 53 , 4 ) $
\\ \hline
$ 4369 $ &
$ ( 43055 , 265 , 4 ) $  ,
$ ( -56162 , 265 , 4 ) $
\\ \hline
$ 4559 $ &
$ ( -9839 , 73 , 4 ) $  ,
$ ( -3838 , 73 , 4 ) $
\\ \hline
$ 4703 $ &
$ ( -2519 , 85 , 4 ) $  ,
$ ( -11590 , 85 , 4 ) $
\\ \hline
$ 4799 $ &
$ ( -8278 , 61 , 4 ) $  ,
$ ( -6119 , 61 , 4 ) $
\\ \hline
$ 5441 $ &
$ ( -14842 , 101 , 4 ) $  ,
$ ( -1481 , 101 , 4 ) $
\\ \hline
$ 5729 $ &
$ ( -9442 , 65 , 4 ) $  ,
$ ( -7745 , 65 , 4 ) $
\\ \hline
$ 5743 $ &
$ ( -9439 , 65 , 4 ) $  ,
$ ( -7790 , 65 , 4 ) $
\\ \hline
$ 6001 $ &
$ ( -11281 , 73 , 4 ) $  ,
$ ( -6722 , 73 , 4 ) $
\\ \hline
$ 6239 $ &
$ ( -1558 , 109 , 4 ) $  ,
$ ( -17159 , 109 , 4 ) $
\\ \hline
$ 7361 $ &
$ ( 9799 , 173 , 4 ) $  ,
$ ( -31882 , 173 , 4 ) $
\\ \hline
$ 7663 $ &
$ ( -35390 , 185 , 4 ) $  ,
$ ( 12401 , 185 , 4 ) $
\\ \hline
$ 7681 $ &
$ ( 42598 , 277 , 4 ) $  ,
$ ( -65641 , 277 , 4 ) $
\\ \hline
$ 8401 $ &
$ ( -17761 , 97 , 4 ) $  ,
$ ( -7442 , 97 , 4 ) $
\\ \hline
$ 8959 $ &
$ ( -113839 , 377 , 4 ) $  ,
$ ( 86962 , 377 , 4 ) $  ,
$ ( -5599 , 113 , 4 ) $  ,
$ ( -21278 , 113 , 4 ) $
\\ \hline
$ 9071 $ &
$ ( -11255 , 85 , 4 ) $  ,
$ ( -15958 , 85 , 4 ) $
\\ \hline
$ 9601 $ &
$ ( 133199 , 457 , 4 ) $  ,
$ ( -162002 , 457 , 4 ) $
\\ \hline
\end{longtable}
\end{center}

\begin{center}
\begin{longtable}{|c|l|}
\caption{Triples of non-trivial primitive solutions $(x,y,n)$ 
\\
of equation~\eqref{eq:main} for $d=6$ and prime $n\geq 3$ for $1 \leq r\leq \bound$.}
  \label{table:solutionsAP6}
 \\
\hline \multicolumn{1}{|c|}{\boldmath$r$} & \multicolumn{1}{c|}{\boldmath$(x,y,n)$}
\\ \hline 
\endfirsthead

\multicolumn{2}{c}
{{\bfseries \tablename\ \thetable{} -- continued from previous page}} \\
\hline \multicolumn{1}{|c|}{\boldmath$r$} &
\multicolumn{1}{c|}{\boldmath$(x,y,n)$}  \\ \hline 
\endhead

\hline \multicolumn{2}{|r|}{{Continued on next page}} \\ \hline
\endfoot

\hline \hline
\endlastfoot
$ 13 $ &
$ ( -20 , 19 , 3 ) $  ,
$ ( -71 , 19 , 3 ) $
\\ \hline
$ 23 $ &
$ ( -22 , 31 , 3 ) $  ,
$ ( -139 , 31 , 3 ) $
\\ \hline
$ 55 $ &
$ ( -828 , 19 , 5 ) $  ,
$ ( 443 , 19 , 5 ) $
\\ \hline
$ 347 $ &
$ ( -1525 , 139 , 3 ) $  ,
$ ( -904 , 139 , 3 ) $
\\ \hline
$ 365 $ &
$ ( 4082 , 559 , 3 ) $  ,
$ ( -6637 , 559 , 3 ) $
\\ \hline
$ 455 $ &
$ ( 1970807 , 28579 , 3 ) $  ,
$ ( -1973992 , 28579 , 3 ) $
\\ \hline
$ 527 $ &
$ ( -2554 , 199 , 3 ) $  ,
$ ( -1135 , 199 , 3 ) $
\\ \hline
$ 535 $ &
$ ( 4348 , 619 , 3 ) $  ,
$ ( -8093 , 619 , 3 ) $
\\ \hline
$ 679 $ &
$ ( 12697 , 1111 , 3 ) $  ,
$ ( -17450 , 1111 , 3 ) $
\\ \hline
$ 743 $ &
$ ( -3907 , 271 , 3 ) $  ,
$ ( -1294 , 271 , 3 ) $
\\ \hline
$ 851 $ &
$ ( 2328605 , 31951 , 3 ) $  ,
$ ( -2334562 , 31951 , 3 ) $
\\ \hline
$ 1145 $ &
$ ( -3034 , 31 , 5 ) $  ,
$ ( -4981 , 31 , 5 ) $
\\ \hline
$ 1283 $ &
$ ( -7729 , 451 , 3 ) $  ,
$ ( -1252 , 451 , 3 ) $
\\ \hline
$ 1391 $ &
$ ( 56362832 , 267139 , 3 ) $  ,
$ ( -56372569 , 267139 , 3 ) $
\\ \hline
$ 1607 $ &
$ ( -10270 , 559 , 3 ) $  ,
$ ( -979 , 559 , 3 ) $
\\ \hline
$ 1615 $ &
$ ( 1231 , 691 , 3 ) $  ,
$ ( -12536 , 691 , 3 ) $
\\ \hline
$ 1985 $ &
$ ( -4999 , 451 , 3 ) $  ,
$ ( -8896 , 451 , 3 ) $
\\ \hline
$ 2165 $ &
$ ( -6922 , 439 , 3 ) $  ,
$ ( -8233 , 439 , 3 ) $
\\ \hline
$ 2191 $ &
$ ( 5482 , 1039 , 3 ) $  ,
$ ( -20819 , 1039 , 3 ) $
\\ \hline
$ 2263 $ &
$ ( 1360645 , 22399 , 3 ) $  ,
$ ( -1376486 , 22399 , 3 ) $
\\ \hline
$ 2363 $ &
$ ( -16792 , 811 , 3 ) $  ,
$ ( 251 , 811 , 3 ) $
\\ \hline
$ 2669 $ &
$ ( 214052 , 6691 , 3 ) $  ,
$ ( -232735 , 6691 , 3 ) $
\\ \hline
$ 2813 $ &
$ ( 1109606 , 19591 , 3 ) $  ,
$ ( -1129297 , 19591 , 3 ) $
\\ \hline
$ 2893 $ &
$ ( 53803 , 2911 , 3 ) $  ,
$ ( -74054 , 2911 , 3 ) $
\\ \hline
$ 2933 $ &
$ ( 865 , 19 , 7 ) $  ,
$ ( -21396 , 19 , 7 ) $
\\ \hline
$ 2983 $ &
$ ( 302191 , 8371 , 3 ) $  ,
$ ( -323072 , 8371 , 3 ) $
\\ \hline
$ 3101 $ &
$ ( 7328 , 1291 , 3 ) $  ,
$ ( -29035 , 1291 , 3 ) $
\\ \hline
$ 3263 $ &
$ ( -25474 , 1111 , 3 ) $  ,
$ ( 2633 , 1111 , 3 ) $
\\ \hline
$ 3451 $ &
$ ( -1049 , 979 , 3 ) $  ,
$ ( -23108 , 979 , 3 ) $
\\ \hline
$ 3767 $ &
$ ( -30715 , 1279 , 3 ) $  ,
$ ( 4346 , 1279 , 3 ) $
\\ \hline
$ 4117 $ &
$ ( 263895274 , 747631 , 3 ) $  ,
$ ( -263924093 , 747631 , 3 ) $
\\ \hline
$ 4199 $ &
$ ( 90320 , 4051 , 3 ) $  ,
$ ( -119713 , 4051 , 3 ) $
\\ \hline
$ 4307 $ &
$ ( -36604 , 1459 , 3 ) $  ,
$ ( 6455 , 1459 , 3 ) $
\\ \hline
$ 4315 $ &
$ ( -7631 , 871 , 3 ) $  ,
$ ( -22574 , 871 , 3 ) $
\\ \hline
$ 4387 $ &
$ ( 3160291 , 39259 , 3 ) $  ,
$ ( -3191000 , 39259 , 3 ) $
\\ \hline
$ 4883 $ &
$ ( -43177 , 1651 , 3 ) $  ,
$ ( 8996 , 1651 , 3 ) $
\\ \hline
$ 5369 $ &
$ ( 503 , 1399 , 3 ) $  ,
$ ( -38086 , 1399 , 3 ) $
\\ \hline
$ 5423 $ &
$ ( 36224 , 2659 , 3 ) $  ,
$ ( -74185 , 2659 , 3 ) $
\\ \hline
$ 5719 $ &
$ ( -16178 , 871 , 3 ) $  ,
$ ( -23855 , 871 , 3 ) $
\\ \hline
$ 5935 $ &
$ ( -13448 , 979 , 3 ) $  ,
$ ( -28097 , 979 , 3 ) $
\\ \hline
$ 5971 $ &
$ ( -19613 , 859 , 3 ) $  ,
$ ( -22184 , 859 , 3 ) $
\\ \hline
$ 6143 $ &
$ ( -58519 , 2071 , 3 ) $  ,
$ ( 15518 , 2071 , 3 ) $
\\ \hline
$ 6827 $ &
$ ( -67360 , 2299 , 3 ) $  ,
$ ( 19571 , 2299 , 3 ) $
\\ \hline
$ 7501 $ &
$ ( 66655 , 3751 , 3 ) $  ,
$ ( -119162 , 3751 , 3 ) $
\\ \hline
$ 7547 $ &
$ ( -77029 , 2539 , 3 ) $  ,
$ ( 24200 , 2539 , 3 ) $
\\ \hline
$ 8303 $ &
$ ( -87562 , 2791 , 3 ) $  ,
$ ( 29441 , 2791 , 3 ) $
\\ \hline
$ 8987 $ &
$ ( 18857 , 2551 , 3 ) $  ,
$ ( -81766 , 2551 , 3 ) $
\\ \hline
$ 9715 $ &
$ ( -28034 , 1231 , 3 ) $  ,
$ ( -39971 , 1231 , 3 ) $
\\ \hline
$ 9923 $ &
$ ( -111364 , 3331 , 3 ) $  ,
$ ( 41903 , 3331 , 3 ) $
\\ \hline
\end{longtable}
\end{center}


\end{document}